%
\documentclass[12pt]{article}
\usepackage{amsmath,amssymb,amsfonts,amsthm}
\usepackage[dvips]{graphicx}
\usepackage{natbib}
\usepackage{array}

\textwidth  5.75in
\textheight 8.5in
\topmargin 0in
\oddsidemargin 0.38in
\evensidemargin 0.38in
\makeatletter
\@addtoreset{equation}{section}
\@addtoreset{figure}{section}
\@addtoreset{table}{section}

\makeatother

\newtheorem{rem}{Remark}[section]

\newtheorem{prop}{Proposition}[section]
\newcommand\R{\mathbb{R}}
\renewcommand\S{\mathbb{S}}
\newcommand{\Cov}{\mathrm{Cov}}
\newcommand{\vol}{\mathrm{Vol}}
\newcommand{\argmin}{\mathop{\mathrm{argmin}}}
%
%
\begin{document}
\title{
Likelihood ratio tests for positivity in polynomial regressions
}
\setcounter{footnote}{1}
\author{
Naohiro Kato
\!\thanks{
The Institute of Statistical Mathematics,
10-3 Midoricho, Tachikawa, Tokyo 190-8562, Japan.
Email:\,\texttt{nkato@ism.ac.jp}
}
\and
Satoshi Kuriki
\!\thanks{
The Institute of Statistical Mathematics,
10-3 Midoricho, Tachikawa, Tokyo 190-8562, Japan.
Email:\,\texttt{kuriki@ism.ac.jp}
}}
\date{}
\maketitle
\begin{abstract}
A polynomial that is nonnegative over a given interval
is called a positive polynomial.
The set of such positive polynomials forms a closed convex cone $K$.
In this paper, we consider the likelihood ratio test for the hypothesis
of positivity that the estimand polynomial regression curve is a positive polynomial.
By considering hierarchical hypotheses including the hypothesis of positivity,
we define nested likelihood ratio tests, and derive their null distributions
as mixtures of chi-square distributions by using the volume-of-tubes method.
The mixing probabilities are obtained by utilizing the parameterizations
for the cone $K$ and its dual provided in the framework of
Tchebycheff systems for polynomials of degree at most 4.
For polynomials of degree greater than 4, the upper and lower bounds
for the null distributions are provided.
Moreover, we propose associated simultaneous confidence bounds for
polynomial regression curves.
Regarding computation,
we demonstrate that symmetric cone programming is useful
to obtain the test statistics.
As an illustrative example,
we conduct data analysis on growth curves of two groups.
We examine the hypothesis that
the growth rate (the derivative of growth curve) of one group is
always higher than the other.

\smallskip\noindent
\textit{Key words:}\,%
chi-bar square distribution,
cone of positive polynomials,
moment cone,
symmetric cone programming,
Tchebycheff system,
volume-of-tubes method.
\end{abstract}


\section{Introduction}
\label{sec:intro}

Consider the polynomial regression model of degree $n$ ($\ge 1$):
\begin{equation}
 y_h = f(t_h;c) + \varepsilon_h, \quad
 f(t;c) = c^\top \psi(t) = \sum_{i=0}^n c_i t^i, \quad t\in T,
\label{regression}
\end{equation}
$h=1,\ldots,N$,
where $\psi(t)\,(=\psi_n(t)) =(1,t,\ldots,t^n)^\top$ and
$c=(c_0,c_1,\ldots,c_n)^\top$ are column vectors in $\R^{n+1}$.
The errors $\varepsilon_h$ are independently distributed
according to the normal distribution
$N(0,\sigma^2)$ with mean 0 and variance $\sigma^2$.
$T\subseteq\R$ is the region of the explanatory variable $t$
where the model (\ref{regression}) is defined.
Typically, $T$ is a bounded interval in $\R$.
We assume that sufficient statistics of the model (\ref{regression}),
that is, the ordinary least squares estimator $\widehat c$ of $c$, and
when $\sigma^2$ is unknown,
the unbiased variance estimator $\widehat\sigma^2$ of $\sigma^2$,
distributed independently of $\widehat c$,
are available.

In this paper, we deal with the hypothesis of positivity,
or of superiority:
\begin{equation}
\label{positivity}
 f(t;c) \ge 0 \quad\mbox{for all $t\in T$}.
\end{equation}
To state its statistical meaning,
it is natural to consider a two-sample problem.
Let $f(t;c_{(j)})=c_{(j)}^\top \psi(t)$ ($j=0,1$)
be the polynomial regression curves of two groups.
The hypothesis that the polynomial curve of group $1$ is always
bounded below by, or superior to,
group $0$ is expressed as $f(t;c_{(1)}) \ge f(t;c_{(0)})$ for all $t\in T$.
Taking the difference, we see that (\ref{positivity}) represents
the hypothesis of superiority.
It is also possible to model the difference of two profiles (mean vectors)
by a polynomial without modeling the profile of each group
 (see Section \ref{subsec:example}).
This notion of superiority is particularly important
in statistical tests for assessing new drugs (\cite{Liu-etal09}).

The set of coefficients $c$ satisfying (\ref{positivity}) forms
a closed convex cone:
\begin{equation}
\label{K}
 K \,(= K_n)
 = \bigl\{ c \in \R^{n+1} \mid c^\top \psi(t) \ge 0,\ \forall t\in T \bigr\}.
\end{equation}
This is referred to as the cone of positive polynomials (\cite{Barvinok02}).
We use $K_n$ instead of $K$ when we emphasize that $K$ is defined in $\R^{n+1}$.
$K$ is closed, since
$K = \bigcap_{t\in T} \bigl\{ c \mid c^\top \psi(t) \ge 0\}$
is the intersection of closed sets.
The hypothesis (\ref{positivity}) is rewritten as $c\in K$.
Including this hypothesis, we consider the following hierarchical hypotheses:
\begin{equation}
\label{H012}
H_0: c=0, \quad H_1: c\in K, \quad\mbox{and}\ \ %
H_2: c\in \R^{n+1}\ (\mbox{$c$ is unrestricted}).
\end{equation}
We then formalize the test for positivity
as the likelihood ratio test (LRT) for testing $H_1$ against $H_2$.
In addition, we define an LRT for testing $H_0$ against $H_1$.
In the context of the two-sample problem,
this is the test for the equality of two regression curves
against the hypothesis of superiority.
As we see later, it is mathematically convenient to treat the two LRTs
at a time.

The theory of LRTs for convex cone hypotheses
has been developed under the name of order restricted inference 
 (\cite{Robertson-etal88}).
A general theorem states that the null distribution of LRT statistics is
a finite mixture of chi-square distributions (\cite{Shapiro88}).
When the cone has piecewise smooth boundaries,
\cite{Takemura-Kuriki97,Takemura-Kuriki02}
proved that the weights (mixing probabilities) are expressed
in terms of curvature measures on boundaries.
These results arise out of a geometric approach referred to as
the volume-of-tubes method.
Using this method, \cite{Kuriki-Takemura00} gave the weights
associated with the cone of nonnegative definite matrices.
However, the weights of few cones are obtained explicitly.

The main result of the present paper is the derivation of
the weights associated with the cone of positive polynomials $K$,
that is, the null distribution of the LRT for positivity.
By applying the representation (parameterization) theorem for
the positive polynomial cone and its dual cone
developed in the framework of Tchebycheff systems
(\cite{Karlin-Studden66}), we evaluate the weights
of the two highest degrees ($w_{n+1},w_n$) and the two lowest degrees
($w_0,w_n$).  In terms of these weights, the null distributions of the LRTs
are expressed
when the degree $n$ of the polynomial regression is less than or equal to $4$.
When the degree $n$ is more than 4, upper and lower bounds for the null
distributions are provided.

The outline of the paper is as follows.
In Section \ref{sec:lrt}, we present the expressions of the LRT statistics
in both cases where the variance $\sigma^2$ is known and unknown.
As in most statistical tests, we can also propose
simultaneous confidence bands associated with the LRT for positivity.
In Section \ref{sec:null},
we first briefly summarize the volume-of-tubes method.
In order to apply this method, we need
the volumes of the cone $K$, its dual cone, and their boundaries.  
Modifying the representation theorems for the positive polynomials
in Tchebycheff systems, we obtain explicit formulas for the weights.
In Section \ref{sec:computation}, we discuss computation.
To construct our LRT statistics, we need the maximum likelihood estimate
 (MLE) $f(t;\widehat c_K)$, say, under the hypothesis of positivity.
The coefficient $\widehat c_K$ is calculated as the orthogonal projection
of $\widehat c$ onto the positive polynomial cone $K$.
We show that this calculation can be conducted by
symmetric cone programming, which is extensively studied
in the optimization community.
We also demonstrate an example of growth curve data analysis.

Throughout the paper, we treat only the polynomial regression.
However, a polynomial is just one example of Tchebycheff systems.
The approach developed here is applicable to other systems.
Another typical example is trigonometric regression
\[
 f(\theta;c) = c_0 + \sum_{i=1}^{n/2} \bigl\{ c_{2i-1} \sin(i \theta) + c_{2i} \cos(i\theta) \bigr\}, \quad \theta\in\Theta\subseteq [0,2\pi),
\]
and we can consider the testing problem for the positivity once more.
In this case, by changing a variable $t=\tan(\theta/2)$,
all results in the polynomial regression are translated
into the trigonometric regression.

\section{Likelihood ratio tests and confidence bands}
\label{sec:lrt}

\subsection{Likelihood ratio test statistics}
\label{subsec:lrt}

Throughout the paper, we need to deal with a metric linear space and its dual
space simultaneously.
We write the inner product and the norm as
\[
 \langle x,y\rangle_Q = x^\top Q y, \quad
 \Vert x\Vert_Q = \sqrt{\langle x,x\rangle_Q},
\]
where $Q$ is a positive definite matrix.
The orthogonal projection of $x$ onto the set $A$ with respect to the distance
$\Vert\,\Vert_Q$ is denoted by
\[
 \Pi_Q(x|A) = \argmin_{y\in A} \Vert x-y\Vert_Q.
\]
This is well defined when $A$ is a closed convex set.
The subscript $Q$ in $\langle\,,\,\rangle_Q$, $\Vert\,\Vert_Q$, and $\Pi_Q$
will be omitted when it does not cause any confusion.

In the regression model (\ref{regression}) with $\sigma^2$ known,
the least squares statistic $\widehat c$ is sufficient,
and we can restrict attention to inference based on $\widehat c$.
The distribution of $\widehat c$ is the $(n+1)$-dimensional normal distribution
$N_{n+1}(c,\Sigma)$ with mean vector $c$ and covariance matrix $\Sigma$,
where $\Sigma=\sigma^2 \Sigma_0$ with
$\Sigma_0=\bigl(\sum_{i=1}^N \psi(t_i) \psi(t_i)^\top\bigr)^{-1}$,
the inverse of the design matrix.
When $\sigma^2$ is unknown, the sufficient statistic is the pair
$(\widehat c,\widehat\sigma^2)$,
where $\widehat\sigma^2$ is the unbiased estimator of $\sigma^2$
calculated from the residuals, and
whose distribution is proportional to that of a chi-square random variable
with $\nu=N-n-1$ degrees of freedom.

Given the data $\widehat c$ distributed as the normal distribution
$N_{n+1}(c,\Sigma)$ with $\Sigma=\sigma^2\Sigma_0$ known,
the MLE of $c$ under the hypothesis of positivity
$H_1: c\in K$ is the orthogonal projection
$\widehat c_K$ of $\widehat c$ onto the cone $K$
under the metric $\langle\,,\,\rangle_{\Sigma^{-1}}$.
When $\sigma^2$ is unknown, the MLE is the orthogonal projection onto $K$
under the metric $\langle\,,\,\rangle_{\widehat\Sigma^{-1}}$,
$\widehat\Sigma=\widehat\sigma^2\Sigma_0$.
This MLE is the same as that with $\Sigma$ known,
because the orthogonal projection onto a cone is invariant
with respect to the scale change of metric
$\langle\,,\,\rangle_Q \mapsto \langle\,,\,\rangle_{kQ}$ ($k>0$).
The MLEs of $c$ under $H_0$ and $H_2$ are given as $0$ and
$\widehat c$, respectively.
Acknowledging these facts, we obtain the LRT statistics as follows.
\begin{prop}
\label{prop:lrt}
When the variance $\sigma^2$ is known,
the LRT statistics for $H_0$ against $H_1$, and for $H_1$ against $H_2$
are given by
\begin{equation}
\label{lambda}
 \lambda_{01} = \Vert \widehat c_K \Vert^2_{\Sigma^{-1}} \quad\mbox{and}\quad
 \lambda_{12} = \Vert \widehat c \Vert^2_{\Sigma^{-1}}
              - \Vert \widehat c_K \Vert^2_{\Sigma^{-1}},
\end{equation}
respectively, where $\widehat c_K=\Pi_{\Sigma^{-1}}(\widehat c|K)$.

When the variance $\sigma^2$ is unknown and
an independent and unbiased estimator
$\widehat\sigma^2$ of $\sigma^2$ with $\nu$ degrees of freedom is available,
the LRT statistics for $H_0$ against $H_1$, and for $H_1$ against $H_2$
are given by
\begin{equation}
\label{beta}
 \beta_{01} = \frac{\Vert \widehat c_K \Vert^2_{\widehat\Sigma^{-1}}}{\Vert \widehat c\Vert^2_{\widehat\Sigma^{-1}} + \nu}
 \quad\mbox{and}\quad
 \beta_{12} = \frac{\Vert \widehat c \Vert^2_{\widehat\Sigma^{-1}} - \Vert \widehat c_K \Vert^2_{\widehat\Sigma^{-1}}}{\Vert \widehat c \Vert^2_{\widehat\Sigma^{-1}} - \Vert \widehat c_K \Vert^2_{\widehat\Sigma^{-1}} + \nu},
\end{equation}
respectively, where
$\widehat\Sigma = \widehat\sigma^2 \Sigma_0$,
$\widehat c_K = \Pi_{\widehat\Sigma^{-1}}(\widehat c|K)$.

The null hypotheses are rejected when the LRT statistics are
sufficiently large.
\end{prop}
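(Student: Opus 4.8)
The plan is to evaluate the three constrained maxima of the Gaussian log-likelihood explicitly and then form the ratios, the whole argument resting on a single structural fact about projections onto a convex cone.

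First take $\sigma^2$ known. Since $\widehat c$ is sufficient and $\widehat c\sim N_{n+1}(c,\Sigma)$ with $\Sigma$ fixed, the log-likelihood as a function of $c$ is, up to an additive constant, $\ell(c)=-\tfrac12\Vert\widehat c-c\Vert^2_{\Sigma^{-1}}$. Maximizing $\ell$ over a closed convex set is therefore the same as minimizing $\Vert\widehat c-c\Vert_{\Sigma^{-1}}$, i.e.\ taking the orthogonal projection, so the maxima over $H_0,H_1,H_2$ are attained at $c=0$, $c=\widehat c_K=\Pi_{\Sigma^{-1}}(\widehat c|K)$, and $c=\widehat c$. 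Hence $-2\log\Lambda_{01}=\Vert\widehat c\Vert^2_{\Sigma^{-1}}-\Vert\widehat c-\widehat c_K\Vert^2_{\Sigma^{-1}}$ and $-2\log\Lambda_{12}=\Vert\widehat c-\widehat c_K\Vert^2_{\Sigma^{-1}}$.

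The one fact I would isolate is the cone-orthogonality identity $\langle\widehat c-\widehat c_K,\widehat c_K\rangle_{\Sigma^{-1}}=0$. It follows from the variational characterization of the projection, $\langle\widehat c-\widehat c_K,\,y-\widehat c_K\rangle_{\Sigma^{-1}}\le 0$ for all $y\in K$, applied to the two feasible points $y=2\widehat c_K$ and $y=0$, both of which lie in $K$ precisely because $K$ is a cone; these give opposite inequalities and hence equality. Substituting this identity yields $\Vert\widehat c-\widehat c_K\Vert^2_{\Sigma^{-1}}=\Vert\widehat c\Vert^2_{\Sigma^{-1}}-\Vert\widehat c_K\Vert^2_{\Sigma^{-1}}$, which turns the two expressions into $\lambda_{01}=\Vert\widehat c_K\Vert^2_{\Sigma^{-1}}$ and $\lambda_{12}=\Vert\widehat c\Vert^2_{\Sigma^{-1}}-\Vert\widehat c_K\Vert^2_{\Sigma^{-1}}$.

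For $\sigma^2$ unknown I would start from the sum-of-squares decomposition $\mathrm{RSS}(c)=\nu\widehat\sigma^2+\Vert\widehat c-c\Vert^2_{\Sigma_0^{-1}}$, where $\Sigma_0^{-1}=\sum_i\psi(t_i)\psi(t_i)^\top$ and $\nu\widehat\sigma^2=\mathrm{RSS}(\widehat c)$ is the residual sum of squares at the unrestricted estimate. Profiling out $\sigma^2$ replaces each maximized likelihood by a power of $\min_{c\in H}\mathrm{RSS}(c)$, so the projection step is identical to the known-variance case; scale invariance of the cone projection, already noted in the text, lets me use the $\Sigma_0^{-1}$ metric or equivalently $\widehat\Sigma^{-1}$. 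A short computation then gives $\Lambda_{01}^{2/N}=1-\beta_{01}$ and $\Lambda_{12}^{2/N}=1-\beta_{12}$, after applying the same cone-orthogonality identity in the $\Sigma_0^{-1}$ metric to rewrite $\Vert\widehat c-\widehat c_K\Vert^2_{\Sigma_0^{-1}}$ as $\Vert\widehat c\Vert^2_{\Sigma_0^{-1}}-\Vert\widehat c_K\Vert^2_{\Sigma_0^{-1}}$ and clearing the common factor $\widehat\sigma^2$ between the two metrics. Since $t\mapsto 1-t^{2/N}$ is strictly decreasing, $\beta_{01}$ and $\beta_{12}$ are strictly increasing in $-2\log\Lambda$, so rejecting for large $\beta$ is exactly the likelihood ratio test, which justifies the stated rejection rule. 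The only non-routine ingredient is the cone-orthogonality identity, and I would stress that it uses solely the cone property of $K$ (so that $0$ and $2\widehat c_K$ are feasible), not any of the finer Tchebycheff-system structure of $K$ exploited later; everything else is standard Gaussian likelihood bookkeeping.
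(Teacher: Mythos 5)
Your proposal is correct and follows essentially the same route as the paper, which derives the proposition by identifying the MLEs under $H_0,H_1,H_2$ as $0$, $\widehat c_K$, $\widehat c$ (via orthogonal projection, with scale invariance of the cone projection handling the unknown-$\sigma^2$ case) and then forming the likelihood ratios; the paper merely leaves implicit the details you spell out, namely the cone-orthogonality identity $\langle\widehat c-\widehat c_K,\widehat c_K\rangle_{\Sigma^{-1}}=0$ and the profiling of $\sigma^2$ that turns $\Lambda^{2/N}$ into $1-\beta$. Both of those computations check out exactly, so your write-up is a faithful completion of the paper's argument rather than a different one.
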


The hypothesis of positivity $H_1$ is a composite hypothesis.
To obtain the critical points for testing such a hypothesis,
we need to know the least favorable configuration.
The proof of the following proposition is essentially given in Section 2.3 of
\cite{Robertson-etal88}.
\begin{prop}
\label{prop:lfc}
In both cases where $\sigma^2$ is known or unknown,
the least favorable configurations of the LRTs
for testing $H_1$ (the hypothesis of positivity) against
$H_2$ (the no-restriction hypothesis) are given by the case
where $H_0$ holds, that is, $c=0$.
\end{prop}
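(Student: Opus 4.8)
The plan is to reduce both LRT statistics for testing $H_1$ against $H_2$ to the squared distance from $\widehat c$ to the cone $K$, and then to exploit the additive structure of the convex cone to obtain a pointwise (hence distributional) domination. First I would record the identity
\[
 \lambda_{12} = \Vert\widehat c\Vert^2_{\Sigma^{-1}} - \Vert\widehat c_K\Vert^2_{\Sigma^{-1}}
 = \Vert\widehat c - \widehat c_K\Vert^2_{\Sigma^{-1}} = d(\widehat c, K)^2,
\]
where $d(\cdot,K)$ denotes the $\langle\,,\,\rangle_{\Sigma^{-1}}$-distance to $K$. This rests on the orthogonality property of the projection onto a closed convex cone: the residual $\widehat c - \widehat c_K$ is $\langle\,,\,\rangle_{\Sigma^{-1}}$-orthogonal to $\widehat c_K$, which yields the Pythagorean decomposition $\Vert\widehat c\Vert^2 = \Vert\widehat c_K\Vert^2 + \Vert\widehat c - \widehat c_K\Vert^2$ and identifies $\lambda_{12}$ with $d(\widehat c,K)^2$.

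The geometric heart of the argument is a pointwise inequality. Writing $\widehat c = c + Z$ with $Z\sim N_{n+1}(0,\Sigma)$ whose law does not depend on $c$, I would show that for every $c\in K$ and every realization of $Z$,
\[
 d(c + Z, K) \le d(Z, K).
\]
Here the cone structure is what matters: $K$ is closed under addition (if $f,g\ge 0$ on $T$ then $f+g\ge 0$ on $T$), so for the minimizer $y^\star = \Pi_{\Sigma^{-1}}(Z|K)$ we have $y^\star + c\in K$, whence $d(c+Z,K)\le \Vert (c+Z)-(y^\star+c)\Vert_{\Sigma^{-1}} = \Vert Z-y^\star\Vert_{\Sigma^{-1}} = d(Z,K)$. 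Consequently $\lambda_{12}$ evaluated at $c+Z$ never exceeds $\lambda_{12}$ evaluated at $Z$, and integrating over the common law of $Z$ gives $P_c(\lambda_{12}\ge x)\le P_0(\lambda_{12}\ge x)$ for all $c\in K$. Since equality holds at $c=0\in K$, the supremum of the rejection probability over the null $H_1$ is attained at $c=0$, the asserted least favorable configuration.

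For the unknown-variance case I would note that $\beta_{12}$ is a strictly increasing function of $A := \Vert\widehat c\Vert^2_{\widehat\Sigma^{-1}} - \Vert\widehat c_K\Vert^2_{\widehat\Sigma^{-1}}$, and that, because the cone projection is invariant under the scale change $\langle\,,\,\rangle_Q\mapsto\langle\,,\,\rangle_{kQ}$, the projection $\widehat c_K = \Pi_{\Sigma_0^{-1}}(\widehat c|K)$ does not depend on $\widehat\sigma^2$; thus $A = d_{\Sigma_0^{-1}}(\widehat c,K)^2/\widehat\sigma^2$. Conditioning on $\widehat\sigma^2$, which is independent of $\widehat c$, the same pointwise domination applied to the numerator gives $P_c(A\ge a\mid\widehat\sigma^2)\le P_0(A\ge a\mid\widehat\sigma^2)$; taking expectation over $\widehat\sigma^2$ and using monotonicity of $\beta_{12}$ in $A$ finishes the case. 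The main obstacle I anticipate is bookkeeping rather than depth—confirming the scale-invariance of $\Pi$ and the orthogonality decomposition under $\langle\,,\,\rangle_{\Sigma^{-1}}$—since the essential content is the one-line additivity observation $y^\star + c\in K$, which is exactly the structural feature distinguishing a cone from a general convex null region.
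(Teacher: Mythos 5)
Your proposal is correct and takes essentially the same route as the paper: your pointwise inequality $d(c+Z,K)\le d(Z,K)$, which rests on $y^\star+c\in K$ (the cone being closed under addition by its own elements), is exactly the paper's set inclusion $A-c\supseteq A$ for the acceptance region, which rests on the equivalent fact $K\subseteq K-c$ for $c\in K$. The unknown-variance case is also handled identically in spirit---both reduce $\beta_{12}$ to a monotone function of the known-variance statistic and invoke the independence of $\widehat\sigma^2$.
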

\begin{proof} 
In the case where $\sigma^2$ is known, the acceptance region is of the form
\[
 A=\Bigl\{ x\in\R^{n+1} \mid \min_{y\in K} \Vert x-y \Vert < d \Bigr\}.
\]
We first prove the monotonicity of the set $A$:
\[
 A-c = \{ x-c \mid x\in A\} \supseteq A \quad \mbox{for any $c\in K$}.
\]
This is because, for $c\in K$,
\begin{align*}
A-c
&= \Bigl\{ x-c \mid \min_{y\in K} \Vert x-y \Vert < d \Bigr\} \\
&= \Bigl\{ x \mid \min_{y\in K} \Vert x+c-y \Vert < d \Bigr\} \\
&= \Bigl\{ x \mid \min_{y\in K-c} \Vert x-y \Vert < d \Bigr\}
 \supseteq \Bigl\{x \mid \min_{y\in K} \Vert x-y \Vert < d \Bigr\} =A.
\end{align*}
The last inclusion follows from $K\subseteq K-c$, because $K$ is a convex cone.
Therefore, for $X \sim N_{n+1}(c,\Sigma)$,
\begin{align*}
P(X \in A \mid c)
&= P(X+c \in A \mid c=0) \\
&= P(X \in A-c \mid c=0) \ge P(X \in A \mid c=0),
\end{align*}
and $\inf_{H_1} P(X \in A \mid c) = P_{H_0}(X \in A)$ follows.

In the case where $\sigma^2$ is unknown, the LRT statistic $\beta_{12}$
in (\ref{beta}) is rewritten as
\[
\beta_{12} = \frac{\Vert \widehat c \Vert^2_{\Sigma^{-1}} - \Vert \widehat c_K \Vert^2_{\Sigma^{-1}}}{\Vert \widehat c \Vert^2_{\Sigma^{-1}} - \Vert \widehat c_K \Vert^2_{\Sigma^{-1}} + \nu \widehat\sigma^2/\sigma^2}
= \frac{\lambda_{12}}{\lambda_{12} + \nu \widehat\sigma^2/\sigma^2},
\]
which is monotone in $\lambda_{12}$ in (\ref{lambda}).
The monotonicity of the acceptance region can be proved similarly.  
\end{proof}

\subsection{Simultaneous confidence bounds}
\label{subsec:confidence}

In regression analysis, simultaneous confidence bounds
for the estimated regression curve are often provided
to assess the reliability of the estimated regression curve.
The construction of confidence bands is still an active research topic
because of its practical importance (\cite{Liu10}).
In this subsection, we propose simultaneous confidence bands
that are naturally linked to our proposed LRTs.

In general, when we want to construct simultaneous confidence bands
for the regression curve $\{ f(t;c) \mid t\in T \}$, we need to bound
$|f(t;c)-f(t;\widehat c)| = |(c-\widehat c)^\top \psi(t)|$
above by a pivotal statistic whose distribution
is independent of the true parameters.
The most standard tool to obtain the upper bound is
the Cauchy-Schwarz inequality.
However, in this inequality, strict equality is attained
when and only when (the closure of) the set of undirected rays spanned by
the explanatory variable vectors
$\{ \alpha \psi(t) \mid t\in T,\ \alpha\in \R \}$ forms the whole space.
In our polynomial regression model (\ref{regression}),
this becomes the whole space $\R^{n+1}$ only when $n=1$ and $T=\R$
(\cite{Working-Hotelling29}). 
The cases where $n\ge 2$ or $T$ is a proper subset of $\R$ ($T\subsetneq\R$)
are not easy problems and have been solved in limited cases
 (e.g., \cite{Uusipaikka83}, \cite{Wynn-Bloomfield71}).

In our proposal, we relax the set of estimands from the regression curve
itself.
Let $\mu(dt)$ be a nonnegative measure on $T\subseteq \R$,
and write $\mu[\psi]=\int_T \psi(t) \, \mu(dt)$.
Then, $\mu[\psi]\in K^*$, where
\begin{equation}
\label{Ks}
 K^* \,(= K_{n}^*)
 = \overline{\bigl\{ \mu[\psi] \mid \mu(dt)\ge 0 \bigr\}}
\end{equation}
is the closure of the conic hull of the trajectory $\{\psi(t) \mid t\in T\}$.
This cone is the dual cone of the positive polynomial cone $K$ in (\ref{K}),
and is referred to as the moment cone (\cite{Barvinok02}).
We construct confidence bands on the basis of the inequality
\begin{align}
\mu[f_{\widehat c}] - \mu[f_c]
 = \int_T (\widehat c-c)^\top \psi(t) \,\mu(dt)
& = \langle \Sigma^{-1}(\widehat c-c),\mu[\psi]\rangle_{\Sigma}  \nonumber \\
& \le \bigl\Vert \mu[\psi] \bigr\Vert_{\Sigma} \cdot
  \bigl\Vert \Pi_{\Sigma}(\Sigma^{-1}(\widehat c-c)|K^*) \bigr\Vert_{\Sigma},
\label{inequality}
\end{align}
where $\mu[f_c]=\int f(t;c)\,\mu(dt)$.
The equality in (\ref{inequality}) holds for some $\mu$ if and only if
$\widehat c-c \notin -K\setminus\{0\}$, where
$K$ is the positive polynomial cone in (\ref{K}).

The statistic
$\bigl\Vert \Pi_{\Sigma}(\Sigma^{-1}(\widehat c-c)|K^*) \bigr\Vert_{\Sigma}$
in (\ref{inequality}) is distributed independently of the true parameter $c$.
Moreover, its square is rewritten as
\begin{align*}
\bigl\Vert \Sigma^{-1}(\widehat c-c) \bigr\Vert_{\Sigma}^2
- \min_{x\in K^*} \bigl\Vert \Sigma^{-1}(\widehat c-c) -x \bigr\Vert_{\Sigma}^2
&= \bigl\Vert \widehat c-c \bigr\Vert_{\Sigma^{-1}}^2
- \min_{y\in \Sigma K^*} \bigl\Vert \widehat c-c -y \bigr\Vert_{\Sigma^{-1}}^2
 \\
&= \bigl\Vert\Pi_{\Sigma^{-1}}(\widehat c-c|\Sigma K^*)\bigr\Vert_{\Sigma^{-1}}^2 \\
&= \Vert \widehat c-c \Vert_{\Sigma^{-1}}^2 -
 \bigl\Vert \Pi_{\Sigma^{-1}}(\widehat c-c|K) \bigr\Vert_{\Sigma^{-1}}^2,
\end{align*}
which has the same distribution as $\lambda_{12}$ in (\ref{lambda})
under $H_0:c=0$.
Using the upper $\alpha$ percentile $\lambda_{12,\alpha}$ of the distribution
of $\lambda_{12}$ under $H_0$,
we obtain the $1-\alpha$ simultaneous confidence bands as follows.
\begin{prop}
\label{prop:confidence}
The statement below holds with probability $1-\alpha$:
\[
 \mu[f_c] \in
 \bigl( \mu[f_{\widehat c}]
 - \sqrt{\lambda_{12,\alpha}}\,\Vert\mu[\psi]\Vert_{\Sigma},\infty \bigr)
 \quad\mbox{for all nonnegative measures $\mu$ on $T$}.
\]
\end{prop}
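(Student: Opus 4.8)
The plan is to reduce the entire simultaneous statement to a bound on the single pivotal statistic
\[
 S = \bigl\Vert \Pi_{\Sigma}(\Sigma^{-1}(\widehat c-c)|K^*) \bigr\Vert_{\Sigma}.
\]
The crucial structural fact, already encoded in (\ref{inequality}), is that this one quantity $S$ controls the discrepancy $\mu[f_{\widehat c}]-\mu[f_c]$ for \emph{every} nonnegative measure $\mu$ at once: dividing (\ref{inequality}) by $\Vert\mu[\psi]\Vert_{\Sigma}$ shows that $S$ dominates the normalized discrepancy uniformly in $\mu$. Thus a bound on the scalar $S$ instantly yields a bound that is simultaneous in $\mu$, which is exactly what a confidence band requires. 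This is the conceptual heart of the argument, and it is what lets us avoid a genuinely infinite-dimensional probability calculation.

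Next I would pin down the law of $S$. By the computation carried out just before the proposition, $S^2$ is distributed independently of the true parameter $c$ and has the same distribution as $\lambda_{12}$ in (\ref{lambda}) under $H_0:c=0$, namely a mixture of chi-square distributions. Since $\lambda_{12,\alpha}$ is the upper $\alpha$ percentile of this law and is strictly positive in the relevant range, continuity of the mixture at this positive point gives
\[
 P\bigl(S<\sqrt{\lambda_{12,\alpha}}\bigr) = P\bigl(S^2\le\lambda_{12,\alpha}\bigr)=1-\alpha.
\]
The appeal to continuity here is precisely what licenses the open (strict) interval appearing in the statement.

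Finally I would splice the two ingredients together. On the event $\{S<\sqrt{\lambda_{12,\alpha}}\}$, substituting into (\ref{inequality}) gives, for every nonzero nonnegative measure $\mu$, the strict bound $\mu[f_{\widehat c}]-\mu[f_c] < \sqrt{\lambda_{12,\alpha}}\,\Vert\mu[\psi]\Vert_{\Sigma}$, which rearranges to the asserted inclusion; hence the simultaneous statement holds with probability at least $1-\alpha$. To see that the coverage is exactly $1-\alpha$ rather than conservative, I would invoke the equality condition recorded before the proposition: whenever $\widehat c-c\notin -K\setminus\{0\}$ the supremum over $\mu$ of the normalized discrepancy is attained and equals $S$, so the simultaneous event coincides, up to a null set, with $\{S<\sqrt{\lambda_{12,\alpha}}\}$; on the complementary event $\widehat c-c\in -K\setminus\{0\}$ one checks $S=0<\sqrt{\lambda_{12,\alpha}}$, so no coverage is lost there. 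The main obstacle is therefore not any hard estimate but this passage from the single scalar inequality to a uniform-in-$\mu$ statement together with the tightness needed for exact coverage; the only care required is (i) continuity of the null distribution at the positive percentile $\lambda_{12,\alpha}$, for the strict inequality, and (ii) the degenerate measure $\mu=0$, for which $\Vert\mu[\psi]\Vert_{\Sigma}=0$ forces equality and which is to be read as a boundary case excluded from the strict statement.
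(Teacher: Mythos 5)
Your argument is correct and is essentially the paper's own: the paper proves the proposition by the derivation immediately preceding it---the projection inequality (\ref{inequality}), the fact that $\bigl\Vert \Pi_{\Sigma}(\Sigma^{-1}(\widehat c-c)|K^*) \bigr\Vert_{\Sigma}^2$ is pivotal and has the same distribution as $\lambda_{12}$ under $H_0$, and calibration by the percentile $\lambda_{12,\alpha}$---which is precisely your reduction to the scalar $S$. The additional points you make (exact rather than conservative coverage via the attainment condition, and the degenerate cases $\mu=0$ and $\widehat c-c\in -K\setminus\{0\}$, where $S=0$) simply spell out details that the paper leaves implicit.
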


Considering a particular subclass of nonnegative measures,
we obtain various $1-\alpha$ simultaneous confidence bands.
For example,
\[
 f(t,c) \in \bigl( f(t,\widehat c)
 - \sqrt{\lambda_{12,\alpha}}\,\Vert \psi(t) \Vert_{\Sigma},\infty \bigr)
 \quad\mbox{for all $t\in T$},
\]
and
\[
 \int_{t_0}^t f(t,c)\,dt \in \biggl( \int_{t_0}^t f(t,\widehat c)\,dt
 - \sqrt{\lambda_{12,\alpha}}\,
 \biggl\Vert \int_{t_0}^t \psi(t)\,dt\,\biggr\Vert_{\Sigma},\infty \biggr)
 \quad\mbox{for all $t\in T$}
\]
hold with probabilities greater than or equal to $1-\alpha$.

When $\sigma^2$ is unknown but its unbiased estimator $\widehat\sigma^2$
with $\nu=N-n-1$ degrees of freedom is available,
we can obtain the simultaneous confidence bands by replacing
$\Vert\,\Vert_{\Sigma}$ with $\Vert\,\Vert_{\widehat\Sigma}$
 ($\widehat\Sigma=\widehat\sigma^2\Sigma_0$), and
$\lambda_{12,\alpha}$ with $\lambda_{12,\alpha}'$,
where $\lambda_{12,\alpha}'$ is the upper $\alpha$ quantile of
the distribution of $\lambda_{12}/\sqrt{s}$, $s\sim\chi^2_\nu/\nu$.

\section{Null distributions of the LRT statistics}
\label{sec:null}

\subsection{The volume-of-tubes method}
\label{subsec:tube}

In this subsection, we briefly summarize the volume-of-tubes method.

Historically, the distributions of the orthogonal projection of
zero-mean Gaussian random vectors have been well studied, since
they appear as the null distribution of the test statistic in an order restricted inference.
From the general theory,
the statistics $\lambda_{01}$ and $\lambda_{12}$ in (\ref{lambda})
under the null hypothesis $H_0:c=0$ have the following distribution:
\begin{equation}
\label{chi-bar}
P_{H_0}(\lambda_{01}\ge a,\,\lambda_{12}\ge b) =
 \sum_{i=0}^{n+1} w_i \bar G_i(a) \bar G_{n+1-i}(b),
\end{equation}
where $\bar G_i$ is the upper probability of the chi-square distribution
with $i$ degrees of freedom.
Let $\bar G_0(a)=1$ ($a\le 0$), $0$ ($a>0$).
In addition, the distribution of the LRTs $\beta_{01}$
and $\beta_{12}$ in (\ref{beta}) under $H_0$ is expressed as follows:
\begin{equation}
\label{E-bar}
P_{H_0}(\beta_{01}\ge a,\,\beta_{12}\ge b) =
 \sum_{i=0}^{n+1} w_i \bar B_{\frac{i}{2},\frac{n+1-i+\nu}{2}}(a) \bar B_{\frac{n+1-i}{2},\frac{\nu}{2}}(b),
\end{equation}
where $\bar B_{k,l}$ is the upper probability of the beta distribution
with parameter $(k,l)$.

Note that the coefficients $w_i$ appearing in (\ref{E-bar}) are the same as
those in (\ref{chi-bar}).
They are nonnegative and satisfy $\sum_i w_i=1$.
This means that the distributions of $(\lambda_{01},\lambda_{12})$
and $(\beta_{01},\beta_{12})$ are finite mixture distributions
with the same weights $\{w_i\}$.
The marginal distributions of
$\lambda_{01}$, $\lambda_{12}$, $\beta_{01}$, $\beta_{12}$
can be obtained just by letting $a=-\infty$ or $b=-\infty$.
The finite mixture distribution of the chi-square distributions
in (\ref{chi-bar}) is sometimes referred to as the
chi-bar-square ($\bar\chi^2$) distribution
(\cite{Robertson-etal88}, \cite{Shapiro88}).

When the cone $K_n$ in (\ref{H012}) is polyhedral, that is,
a finite intersection of half spaces,
the weights $\{w_i\}$ can be understood in terms of the internal and
external angles of each face of the cone (\cite{Wynn75}).
Moreover, in the general case where $K_n$ is not polyhedral,
\cite{Takemura-Kuriki97, Takemura-Kuriki02}
proved that the weights $\{w_i\}$ are
expressed as integrals of elementary symmetric polynomials of
principle curvatures of the boundaries of the cone $K_n$.
These integrals are not easy to handle in general.
However, the weights of the two highest degrees and two lowest degrees,
$w_{n+1},w_{n},w_{0}$ and $w_{1}$,
have relatively simple expressions as follows:
\begin{align}
& 
 w_{n+1} = \frac{\vol_{n}(K_n\cap \S^n)}{\Omega_{n+1}}, \quad
 w_{n} = \frac{\vol_{n-1}(\partial K_n\cap \S^n)}{2\,\Omega_{n}},
 \nonumber \\
&
 w_{1} = \frac{\vol^*_{n-1}(\partial K_n^*\cap (\S^n)^*)}{2\,\Omega_{n}}, \quad
 w_{0} = \frac{\vol^*_{n}(K_n^*\cap (\S^n)^*)}{\Omega_{n+1}},
\label{weights}
\end{align}
where
$\partial K_n$ and $\partial K_n^*$ are the boundaries of $K_n$ and $K_n^*$,
\[
 \S^n = \{ x\in \R^{n+1} \mid \Vert x\Vert_{\Sigma^{-1}} =1 \}, \quad
 (\S^n)^* = \{ x\in \R^{n+1} \mid \Vert x\Vert_{\Sigma} =1 \}
\]
are the unit spheres, $\vol_d$ and $\vol^*_d$ are $d$-dimensional
volumes induced by the metrics $\langle\,,\,\rangle_{\Sigma^{-1}}$ and
$\langle\,,\,\rangle_{\Sigma}$, respectively, and
\[
 \Omega_d 
 = \frac{2\pi^{d/2}}{\Gamma(d/2)}
\]
is the volume of the $(d-1)$-dimensional unit sphere in $\R^d$.
The lower dimensional measure is induced by the metric of the ambient space
$\R^{n+1}$.
It is also defined as the Hausdorff measure (\cite{Federer96}).

Moreover, a useful relation
is known as a consequence of the Gauss-Bonnet theorem:
\begin{equation}
\label{gauss-bonnet}
 \sum_{i:\rm odd} w_i = \sum_{i:\rm even} w_i = \frac{1}{2}.
\end{equation}

The distribution of $\beta_{01}$ with $\nu=0$ is interpreted as
the volume formula of a spherical tubular neighborhood as below.
Let $M=K\cap\S^n$ be the intersection of the cone $K\,(=K_n)$ in (\ref{K}) and
the unit sphere.
Define the spherical tube about $M$ with the radius $\theta$:
\[
 \mathrm{Tube}(M,\theta) = \Bigl\{ x\in\S^n \mid
   \min_{y\in M} \mathrm{dist}(x,y) \le \theta \Bigr\}, \quad
 \mathrm{dist}(x,y) = \cos^{-1}\langle x,y\rangle.
\]
Then, because
\[
 \beta_{01} = \frac{\Vert \Pi(\widehat c|K) \Vert^2}{\Vert \widehat c\Vert^2}
 \ge \cos^2\theta
 \ \ \Leftrightarrow\ \ %
 \frac{\widehat c}{\Vert \widehat c \Vert} \in \mathrm{Tube}(M,\theta),
\]
and
$\widehat c/\Vert \widehat c \Vert$ is distributed uniformly on $\S^n$
under $H_0$, we see that
\[
 \frac{\vol_{n}(\mathrm{Tube}(M,\theta))}{\vol_{n}(\S^n)}
 = P_{H_0}(\beta_{01} \ge \cos^2\theta) =
 \sum_{i=0}^{n+1} w_i
 \bar B_{\frac{i}{2},\frac{n+1-i}{2}}(\cos^2\theta).
\]
This is the reason why our methodology is called the tube method
or the volume-of-tubes method.

The volume-of-tubes method has been developed as a tool for approximating
the tail probability of the maximum of a general Gaussian random field
(\cite{Knowles-Siegmund89}, \cite{Sun93}, \cite{Adler-Taylor07}).
This is regarded as a generalization of the distribution
of the projection length of a Gaussian vector onto a convex cone
(\cite{Kuriki-Takemura01}).
This method is also used for the construction of confidence bands
(\cite{Johnstone-Siegmund89}, \cite{Naiman90}).
For the comprehensive survey, see \cite{Kuriki-Takemura09}.

\subsection{Representations for the cones $K_n$ and $K_n^*$}
\label{subsec:parameterization}

In order to evaluate the volumes in (\ref{weights}), we need to
introduce ``local coordinates'' of
the cones $K_n$ in (\ref{K}), $K_n^*$ in (\ref{Ks}),
and their boundaries $\partial K_n$ and $\partial K_n^*$.
This is actually possible by means of representations 
in the theory of Tchebycheff systems.
We consider the following three cases separately:
(i) $T=[a,b]$ (bounded),
(ii) $T=[a,\infty)$, and
(iii) $T=(-\infty,\infty)$.

The two propositions below give representations for the moment cone
$K_n^*$ and its boundary $\partial K_n^*$.
Let
\[
 \psi_n(t) = \begin{cases}
  (1,t,\ldots,t^n)^\top       & (|t|<\infty), \\
  (0,\ldots,0,(\pm 1)^n)^\top & (t=\pm\infty).
\end{cases}
\]
Let
$\R_+=(0,\infty)$, and
\begin{equation}
\label{Delta}
 \Delta_m = \Delta_m(T) = \{ \tau=(\tau_1,\ldots,\tau_m) \in
 (\mathrm{int}\,T)^m \mid \tau_1 < \cdots < \tau_m \}.
\end{equation}
Let $\Delta_0=\emptyset$ formally.
\begin{prop}
\label{prop:Ks-map}
The moment cone $K_{n}^*$ on
(i) $T=[a,b]$, (ii) $T=[a,\infty)$, or (iii) $T=(-\infty,\infty)$
 (when $n=2m$ is even) has the following almost everywhere representations.
Let $a=-\infty$ when $T=(-\infty,\infty)$, and $b=\infty$ when $T=[a,\infty)$
 or $(-\infty,\infty)$.
\begin{align}
K_{n}^*
& = \phi^{(U)}_{n,n} \Bigl( \R_+^{\left[\frac{n+1}{2}\right]+1} \times \Delta_{\left[\frac{n}{2}\right]} \Bigr) \nonumber \\
& = \phi^{(L)}_{n,n} \Bigl( \R_+^{\left[\frac{n}{2}\right]+1} \times \Delta_{\left[\frac{n+1}{2}\right]} \Bigr)
\label{Ks-map}
\end{align}
almost everywhere with respect to the $(n+1)$-dimensional Lebesgue measure,
where
\begin{equation}
\label{upper}
 \phi^{(U)}_{n,l}(\rho,\tau) =
\begin{cases}
 \displaystyle
 \sum_{i=1}^m \rho_{i} \psi_n(\tau_i) + \rho_{m+1} \psi_n(b)
 & (l=2m), \\
 \displaystyle
 \rho_{1} \psi_n(a) + \sum_{i=1}^{m} \rho_{i+1} \psi_n(\tau_i) + \rho_{m+2} \psi_n(b)
 & (l=2m+1),
\end{cases}
\end{equation}
and
\begin{equation}
\label{lower}
 \phi^{(L)}_{n,l}(\rho,\tau) =
\begin{cases}
 \displaystyle
 \rho_1 \psi_n(a) + \sum_{i=1}^m \rho_{i+1} \psi_n(\tau_i)
 & (l=2m), \\
 \displaystyle
 \sum_{i=1}^{m+1} \rho_{i} \psi_n(\tau_i)
 & (l=2m+1).
\end{cases}
\end{equation}
The maps $\phi^{(U)}_{n,n}$ and $\phi^{(L)}_{n,n}$ in
(\ref{Ks-map}) are diffeomorphic. 
\end{prop}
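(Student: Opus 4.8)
The plan is to show that each of the maps $\phi=\phi^{(U)}_{n,n}$ and $\phi=\phi^{(L)}_{n,n}$ is a smooth bijection from its parameter domain onto the interior of $K_n^*$ with everywhere nonsingular Jacobian; the inverse function theorem then promotes the bijection to a diffeomorphism, and since $\partial K_n^*$ carries no $(n+1)$-dimensional Lebesgue measure, the image being $\mathrm{int}\,K_n^*$ yields the almost-everywhere representations (\ref{Ks-map}). First I would record that the parameter domain $\R_+^{k}\times\Delta_{l}$ satisfies $k+l=n+1=\dim K_n^*$ in every case, which is just the index count $(n+1)/2$ of a principal representation, an interior node counting once and each endpoint $a$ or $b$ counting one half. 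Smoothness of $\phi$ is immediate, as it is a fixed linear combination of the real-analytic vectors $\psi_n(\tau_i)$ with the coefficients $\rho_i$.

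Second I would establish the bijection onto $\mathrm{int}\,K_n^*$ by invoking the canonical (principal) representation theory of Tchebycheff systems. The system $(1,t,\ldots,t^n)$ is an extended complete Tchebycheff (ECT) system on any interval, so every interior moment point $u\in\mathrm{int}\,K_n^*$ admits a unique upper principal representation and a unique lower principal representation; for an interior point these are supported on distinct nodes lying strictly inside $T$ together with the relevant endpoints (the point $b$ for $\phi^{(U)}$ with even $n$, both $a$ and $b$ for odd $n$, and symmetrically for $\phi^{(L)}$), all masses being strictly positive. This is exactly the form prescribed by (\ref{upper}) and (\ref{lower}). Existence shows the image contains $\mathrm{int}\,K_n^*$, the canonical form shows the image is contained in $\mathrm{int}\,K_n^*$, and uniqueness gives injectivity.

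Third, the computational heart is the nonsingularity of the Jacobian. Writing $\psi_n'(t)=(0,1,2t,\ldots,nt^{n-1})^\top$, differentiation gives $\partial\phi/\partial\rho_i=\psi_n(\tau_i)$ (or $\psi_n(a),\psi_n(b)$ for an endpoint weight) and $\partial\phi/\partial\tau_i=\rho_i\,\psi_n'(\tau_i)$. Since every $\rho_i>0$, the Jacobian is nonsingular if and only if the family made of $\psi_n(\tau_i)$ and $\psi_n'(\tau_i)$ at each interior node, together with $\psi_n(a)$ and/or $\psi_n(b)$, is linearly independent. This is precisely the Hermite interpolation (confluent Vandermonde) matrix for the ordered nodes $\tau_1<\cdots$, with double multiplicity at the interior nodes and simple multiplicity at the endpoints; because $(1,t,\ldots,t^n)$ is an ECT-system, this matrix is nonsingular for every choice of distinct nodes. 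Hence $\phi$ is a local diffeomorphism at each parameter value, and, being a bijection onto the open set $\mathrm{int}\,K_n^*$, is a global diffeomorphism.

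Finally, for the unbounded cases $T=[a,\infty)$ and $T=(-\infty,\infty)$, I would absorb the point(s) at infinity using $\psi_n(\pm\infty)=(0,\ldots,0,(\pm1)^n)^\top$, which acts as a simple node contributing the single column $(0,\ldots,0,(\pm1)^n)^\top$; the principal-representation statements and the confluent-Vandermonde nonsingularity then carry over. The main obstacle is not the Jacobian, which is routine confluent-Vandermonde algebra, but matching the support structure of the principal representations to the precise parity-dependent forms in (\ref{upper}) and (\ref{lower}), and handling the node at infinity in the unbounded cases (in particular the even-$n$ restriction for $T=\R$), where one must check that the degenerate vector $\psi_n(\pm\infty)$ still contributes an independent column and that no node coalescence or endpoint degeneration occurs on a set of positive measure.
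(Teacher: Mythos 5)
Your proposal is correct, and it rests on the same key lemma as the paper: the existence and uniqueness of upper and lower principal representations for points of the moment cone, which is exactly the Tchebycheff-system (Karlin--Studden) machinery the paper cites. Where you differ is in how the diffeomorphism and the almost-everywhere claims are discharged. The paper's own proof is terser on both points: it cites the representation theorems, dismisses the degenerate configurations (some $\rho_i=0$, or coalescing nodes/endpoints) on the grounds that their images are at most $n$-dimensional, and then concludes with ``one-to-one and obviously differentiable, that is, diffeomorphic.'' Strictly speaking, a differentiable bijection need not be a diffeomorphism (consider $x\mapsto x^3$), so your confluent-Vandermonde computation --- columns $\psi_n(\tau_i)$, $\rho_i\psi_n'(\tau_i)$, $\psi_n(a)$, $\psi_n(b)$ are independent because a nonzero polynomial of degree at most $n$ cannot have $n+1$ zeros counted with multiplicity --- together with the inverse function theorem actually supplies the justification the paper leaves implicit; this is a genuine strengthening, not just a stylistic difference. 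Likewise, your route to the a.e.\ statement (the image is exactly $\mathrm{int}\,K_n^*$, and the boundary of a convex body is Lebesgue-null) is cleaner than, though equivalent to, the paper's dimension-count dismissal of degenerate parameter values. The one place where your write-up is thinner than it should be is the unbounded cases: you flag the node at infinity and the even-$n$ restriction for $T=\R$ as items ``to check'' rather than proving them, whereas the paper disposes of them by citing the specific chapters of Karlin--Studden (Chapter V for $[a,\infty)$, Chapter VI for $(-\infty,\infty)$, the latter stated via periodic systems); to make your argument self-contained there, you would need to verify that the principal-representation theory extends to these intervals with $\psi_n(\pm\infty)=(0,\ldots,0,(\pm1)^n)^\top$ as an admissible support point, which is precisely what those chapters establish.
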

\begin{rem}
\label{rem:lower-upper}
The representations with (\ref{upper}) and (\ref{lower})
are called the upper and lower representations, respectively.
They are coincident when $T=(-\infty,\infty)$ and $n=2m$
 (Definition 3.2 of \cite{Karlin-Studden66}, Section 3 of Chapter II). 
\end{rem}

\begin{rem}
\label{rem:convention}
When $n=1$, $\phi^{(U)}_{n,n}(\rho,\tau)$ is
$\rho_{1} \psi_1(a) + \rho_{2} \psi_1(b)$,
which does not contain the argument $\tau$.
In (\ref{Ks-map}), 
$\phi^{(U)}_{n,n} \bigl( \R_+^{\left[\frac{n+1}{2}\right]+1} \times \Delta_{\left[\frac{n}{2}\right]} \bigr)
=\phi^{(U)}_{1,1} \bigl( \R_+^{2} \times \emptyset \bigr)$ should read as
$\bigl\{\phi^{(U)}_{1,1}(\rho,\tau) \mid \rho\in \R_+^{2}\bigr\}$.
We use this convention in Propositions \ref{prop:Ks-map}--\ref{prop:bK-map}.
\end{rem}

\begin{proof}
The representations of the right-hand sides of (\ref{Ks-map})
for $T=[a,b]$, $[a,\infty)$, and $(-\infty,\infty)$ are provided in
Section 3 of Chapter II, Section 4 of Chapter V, and 
Section 2 of Chapter VI of \cite{Karlin-Studden66}, respectively.
The last case of $T=(-\infty,\infty)$ is stated in terms of periodic functions.
Each of the upper and lower representations is the unique representation
when $\rho_i>0$ for all $i$, and all of $a$, $\tau_i$ and $b$ are distinct.

Although the representations given by \cite{Karlin-Studden66} include
the cases where $\rho_i=0$ for some $i$, and some of $a$, $\tau_i$ and $b$
take the same value, we can ignore them because the images of the maps
$\phi^{(U)}_{n,n}$ and $\phi^{(L)}_{n,n}$ in such cases
are at most $n$-dimensional.

The maps $\phi^{(U)}_{n,n}$ and $\phi^{(L)}_{n,n}$ are one-to-one and
obviously differentiable, that is, diffeomorphic.
\end{proof}

\begin{prop}
\label{prop:bKs-map}
Suppose that $n\ge 2$.
Let $\Delta_m$ be defined in (\ref{Delta}).
Let $\phi^{(U)}_{n,l}$ and $\phi^{(L)}_{n,l}$ be defined in
(\ref{upper}) and (\ref{lower}).
The boundary of the moment cone $\partial K_{n}^*$
has the following almost everywhere representation.
Let again $a=\inf T$ and $b=\sup T$.
\\
(i), (ii) When $T=[a,b]$ or $[a,\infty)$,
\begin{align}
\partial K_{n}^*
=&
 \phi^{(L)}_{n,n-1} \Bigl( \R_+^{\left[\frac{n-1}{2}\right]+1} \times \Delta_{\left[\frac{n}{2}\right]} \Bigr) \nonumber \\
 & \sqcup
 \phi^{(U)}_{n,n-1} \Bigl( \R_+^{\left[\frac{n}{2}\right]+1} \times \Delta_{\left[\frac{n-1}{2}\right]} \Bigr),
\label{bKs-map}
\end{align}
(iii) when $T=(\infty,\infty)$ and $n=2m$ is even,
\begin{align}
\partial K_{n}^*
=&
 \phi^{(L)}_{n,n-1} \Bigl( \R_+^{m} \times \Delta_{m} \Bigr)
\label{bKs-map'}
\end{align}
almost everywhere with respect to the $n$-dimensional Hausdorff measure,
where $\sqcup$ means a disjoint union.
The maps $\phi^{(U)}_{n,n-1}$ and $\phi^{(L)}_{n,n-1}$
in (\ref{bKs-map}) and (\ref{bKs-map'}) are diffeomorphic.
\end{prop}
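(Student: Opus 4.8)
The plan is to combine the duality characterisation of the boundary with the Tchebycheff-system structure already invoked for Proposition~\ref{prop:Ks-map}. Since $K_n^*$ is the dual cone of $K_n$, a point $c^*\in K_n^*$ with $c^*\neq 0$ lies on $\partial K_n^*$ if and only if it admits a supporting direction $c\in K_n\setminus\{0\}$ with $c^\top c^*=0$. Writing $p(t)=c^\top\psi_n(t)$, a nonnegative polynomial of degree at most $n$ on $T$, any representing measure $\mu\ge 0$ with $c^*=\int_T\psi_n\,d\mu$ satisfies $\int_T p\,d\mu=c^\top c^*=0$, so the support of $\mu$ lies in the zero set of $p$. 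As $p\ge 0$, its interior zeros have even multiplicity, the endpoints may carry simple zeros, and the total multiplicity is at most $n$; this is what forces the two canonical support patterns below.

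First I would check that the images of $\phi^{(U)}_{n,n-1}$ and $\phi^{(L)}_{n,n-1}$ lie in $\partial K_n^*$ by exhibiting the supporting polynomial explicitly. For $T=[a,b]$ and $n=2m$, a point of $\phi^{(L)}_{n,n-1}$ is carried by $m$ interior nodes $\tau_1<\cdots<\tau_m$ and is annihilated by $p(t)=\prod_{i=1}^m(t-\tau_i)^2$, whereas a point of $\phi^{(U)}_{n,n-1}$ is carried by $\{a,b\}$ together with $m-1$ interior nodes and is annihilated by $p(t)=(t-a)(b-t)\prod_{i=1}^{m-1}(t-\tau_i)^2$. The odd degree and the half-line are handled by the analogous products carrying a single endpoint factor, and the case $T=(-\infty,\infty)$, $n=2m$, by $p(t)=\prod_{i=1}^m(t-\tau_i)^2$ with no endpoint factor. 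In every case $p\in K_n\setminus\{0\}$ has degree exactly $n$ and vanishes on the support, so $c^\top c^*=0$ and the image point lies on $\partial K_n^*$.

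Next I would prove that the two families exhaust $\partial K_n^*$ up to an $n$-dimensional Hausdorff-null set. As in the proof of Proposition~\ref{prop:Ks-map}, I would appeal to the representation theory of \cite{Karlin-Studden66}: a generic boundary point has a representing measure of reduced index $n-1$, and the upper and lower principal representations of that index are unique and realise precisely the two support patterns above. The remaining configurations --- an interior node coalescing with a neighbour or reaching an endpoint, an interior weight vanishing, or (for unbounded $T$) a node escaping to $\pm\infty$ --- each lower the number of free parameters by at least one and hence sweep out sets of dimension at most $n-1$, which are null for the $n$-dimensional Hausdorff measure and may be discarded. Disjointness of the two pieces and injectivity of each map follow from the uniqueness of the principal representation: the families differ in whether (respectively which of) the endpoints $a,b$ lie in the support, so they cannot overlap, and a boundary point determines its nodes and weights uniquely. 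For $T=(-\infty,\infty)$ with $n$ even the upper and lower forms coincide by Remark~\ref{rem:lower-upper}, collapsing the two pieces into the single representation~(\ref{bKs-map'}).

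Finally, for the diffeomorphism claim I would verify that each $\phi^{(\cdot)}_{n,n-1}$ is an immersion of full rank $n$. Its Jacobian columns are the vectors $\psi_n$ at the support points (from the $\rho$-derivatives) and $\rho_i\,\psi_n'(\tau_i)$ at the interior nodes (from the $\tau$-derivatives), and their independence is equivalent to the space of degree-$\le n$ polynomials that vanish at the prescribed endpoints and have double zeros at the interior $\tau_i$ being one-dimensional. This is immediate from the degree count: any such polynomial is divisible by the annihilating $p$ displayed above, which already has degree $n$, hence is a scalar multiple of it. Thus the orthogonal complement of the Jacobian columns is exactly the line through the coefficient vector of $p$ --- the same polynomial that certified boundary membership --- so the columns are independent and the map is a diffeomorphism onto its image. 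I expect the exhaustion step to be the main obstacle: one must argue within the Tchebycheff framework that the reduced-index principal representations capture every generic boundary point and that all other zero-configurations of the supporting polynomial are lower-dimensional, with the bookkeeping of the conventions $\psi_n(\pm\infty)$ across the three cases being the remaining source of care.
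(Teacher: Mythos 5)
Your proposal is correct and, at its core, follows the same route as the paper: both rest on the Karlin--Studden representation theory for boundary points of moment spaces, identify the support patterns whose parameter count equals $n$, and discard all other configurations as lower-dimensional (hence null for the $n$-dimensional Hausdorff measure). The paper's proof is essentially that citation plus bookkeeping; what you add is a self-contained justification of two points the paper leaves implicit. First, your duality argument --- a nonzero $c^*\in K_n^*$ lies on $\partial K_n^*$ iff it is annihilated by some $c\in K_n\setminus\{0\}$, so any representing measure is supported on the zero set of the nonnegative polynomial $c^\top\psi_n$, whose interior zeros are double and whose total multiplicity is at most $n$ --- explains \emph{why} exactly the two canonical support patterns arise, rather than just quoting them. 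Second, your Jacobian computation (the columns $\psi_n$ at support points and $\rho_i\,\psi_n'(\tau_i)$ at interior nodes have as common annihilator precisely the one-dimensional span of the degree-$n$ supporting polynomial, hence are independent) actually establishes the full rank that ``diffeomorphic'' requires; the paper's proofs only assert one-to-one plus differentiability. One small correction for case (iii): the disappearance of the upper piece when $T=(-\infty,\infty)$ and $n=2m$ is not because the upper and lower representations coincide --- Remark \ref{rem:lower-upper} concerns the full-cone maps with $l=n$, not $l=n-1$ --- but because $\psi_n(-\infty)=\psi_n(+\infty)=(0,\ldots,0,1)^\top$ makes the upper $l=n-1$ family only $(n-1)$-dimensional, so it is discarded as a null set along with the other degenerate strata (this is the paper's own reason). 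If the two pieces literally coincided, the disjoint-union claim in (\ref{bKs-map}) would be jeopardized, so the distinction matters, though it does not affect your conclusion.
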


\begin{proof}
The general forms of the one-to-one representations
for $T=[a,b]$, $[a,\infty)$, and $(-\infty,\infty)$ are provided in
Section 2 of Chapter II, Section 4 of Chapter V, and Section 5 of Chapter VI
of \cite{Karlin-Studden66}, respectively.
The last case of $T=(-\infty,\infty)$ is stated in terms of periodic functions.
Picking up the terms whose images are $n$-dimensional, we have
(\ref{bKs-map}) and (\ref{bKs-map'}).
The second component in (\ref{bKs-map}) disappears
in (\ref{bKs-map'}) because when $T=(-\infty,\infty)$ and $n=2m$,
$\psi_n(a)=\psi_n(b)=(0,\ldots,0,1)^\top$.
\end{proof}

The following two propositions give representations for the positive polynomial
cone $K_{n}$ and its boundary $\partial K_{n}$.
\begin{prop}
\label{prop:K-map}
The positive polynomial cone $K_{n}$ has
the following almost everywhere representation:
\[
 K_{n} = \varphi_n \Bigl( \R_+^2 \times \Delta_{n-1} \Bigr)
\]
almost everywhere with respect to the $(n+1)$-dimensional Lebesgue measure.
Here, the function $\varphi_n (\alpha,\gamma)\in\R^{n+1}$ with
$\alpha=(\alpha_1,\alpha_2)\in \R_+^2$
and
$\gamma=(\gamma_1,\ldots,\gamma_{n-1})\in\Delta_{n-1}$
is the coefficient vector of the polynomial
$p_n(t;\alpha,\gamma) = \varphi_n (\alpha,\gamma)^\top\psi_n(t)$
in $t$ defined below:
\\
(i) When $T=[a,b]$,
\[
 p_{n}(t;\alpha,\gamma) =
\begin{cases}
 \displaystyle
  \alpha_1 \prod_{j=1}^{m} (t-\gamma_{2j-1})^2
+ \alpha_2 (t-a)(b-t) \prod_{j=1}^{m-1} (t-\gamma_{2j})^2 & (n=2m), \\
 \displaystyle
  \alpha_1 (t-a) \prod_{j=1}^{m} (t-\gamma_{2j})^2
+ \alpha_2 (b-t)\prod_{j=1}^{m} (t-\gamma_{2j-1})^2 & (n=2m+1),
\end{cases}
\]
(ii) when $T=[a,\infty)$,
\[
 p_{n}(t;\alpha,\gamma) =
\begin{cases}
 \displaystyle
  \alpha_1 \prod_{j=1}^{m} (t-\gamma_{2j-1})^2
+ \alpha_2 (t-a) \prod_{j=1}^{m-1} (t-\gamma_{2j})^2 & (n=2m), \\
 \displaystyle
  \alpha_1 (t-a) \prod_{j=1}^{m} (t-\gamma_{2j})^2
+ \alpha_2 \prod_{j=1}^{m} (t-\gamma_{2j-1})^2 & (n=2m+1),
\end{cases}
\]
(iii) when $T=(-\infty,\infty)$ and $n=2m$,
\[
 p_{n}(t;\alpha,\gamma) =
  \alpha_1 \prod_{j=1}^{m} (t-\gamma_{2j-1})^2
+ \alpha_2 \prod_{j=1}^{m-1} (t-\gamma_{2j})^2.
\]
The map $\varphi_n$ is a diffeomorphism.
Here, we use the convention $\prod_{j=1}^0=1$.
\end{prop}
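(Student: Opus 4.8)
The plan is to follow the template of the proofs of Propositions \ref{prop:Ks-map} and \ref{prop:bKs-map}: recognize the displayed factorizations as the canonical representations of nonnegative polynomials supplied by the Tchebycheff-system theory of \cite{Karlin-Studden66}, check that they exhaust $K_n$ outside a lower-dimensional set, and check that $\varphi_n$ is one-to-one and smooth with smooth inverse. The inclusion $\varphi_n(\R_+^2\times\Delta_{n-1})\subseteq K_n$ is immediate in every case and parity: each pure-square factor $\prod(t-\gamma)^2$ is nonnegative on all of $\R$, the endpoint factors $(t-a)$, $(b-t)$, $(t-a)(b-t)$ are nonnegative exactly on $[a,\infty)$, $(-\infty,b]$, $[a,b]$, and $\alpha_1,\alpha_2>0$; hence $p_n(t;\alpha,\gamma)\ge 0$ on $T$. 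Since the interlacing condition $\gamma\in\Delta_{n-1}$ keeps the two summands from vanishing simultaneously, $p_n$ is in fact strictly positive on $T$, so the image lies in the interior of $K_n$.

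For the reverse inclusion up to measure zero I would invoke the sharp form of the Markov--Luk\'acs representation: outside a lower-dimensional exceptional set, a polynomial in the interior of $K_n$ of exact degree $n$ is a positive combination of precisely two extreme generators of the cone, each a perfect square possibly multiplied by an endpoint factor, whose real double-roots strictly interlace. These are the positive-polynomial counterparts to the moment-cone representations of Propositions \ref{prop:Ks-map}--\ref{prop:bKs-map} and are found alongside them in the same chapters of \cite{Karlin-Studden66}. The cleanest prototype is case (iii): writing a polynomial positive on $\R$ as $c\,|r|^2$ with $r$ the unique monic factor carrying all complex roots in the open upper half-plane, and setting $q_1=\mathrm{Re}\,r$, $q_2=\mathrm{Im}\,r$, the Hermite--Biehler theorem yields exactly $p=\alpha_1 q_1^2+\alpha_2 q_2^2$ with $\deg q_1=m$, $\deg q_2=m-1$, and the roots of $q_1,q_2$ interlacing; the cases $T=[a,b]$ and $[a,\infty)$ are the analogous canonical decompositions after the appropriate Tchebycheff reduction. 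A parameter count confirms consistency: the two weights together with the $n-1$ interlacing nodes furnish $n+1=\dim K_n$ degrees of freedom, so the image is full-dimensional. The excluded configurations---a vanishing $\alpha_i$, a node colliding with another node or with an endpoint, or a drop in degree---each cut out a set of dimension at most $n$ and hence of $(n+1)$-dimensional Lebesgue measure zero, so they may be discarded in the almost-everywhere statement.

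It then remains to see that $\varphi_n$ is a diffeomorphism onto its image. Smoothness is clear, since the coefficients of $p_n$ are polynomials in $(\alpha,\gamma)$. Injectivity is the uniqueness clause of the canonical representation: the factor $r$ above (equivalently, the two interlacing node sets together with the weights) is uniquely determined by $p$, so distinct parameters give distinct polynomials. Given smoothness, injectivity, and the full-dimensionality of the image, the inverse function theorem---or, equivalently, a direct verification that the Jacobian of $\varphi_n$, a confluent-Vandermonde-type determinant, is nonsingular throughout $\R_+^2\times\Delta_{n-1}$---shows that the inverse is smooth, so $\varphi_n$ is a diffeomorphism.

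The main obstacle is the second step: establishing simultaneously that the single-square / endpoint-weighted-square form with strictly interlacing real nodes exhausts $K_n$ almost everywhere and that it is unique. Neither is implied by the plain Markov--Luk\'acs theorem, which delivers only sum-of-squares pieces; the reduction to a single square per piece with interlacing nodes---that is, the fact that almost every interior positive polynomial is a positive combination of just two extreme rays---is the genuinely Tchebycheff-theoretic content. The safe route is to cite the canonical-representation theorems of \cite{Karlin-Studden66} for the three domains, paralleling the treatment of $K_n^*$ in Propositions \ref{prop:Ks-map}--\ref{prop:bKs-map}; transporting those moment-cone representations through cone duality is a possible alternative, but the duality argument is itself of comparable difficulty.
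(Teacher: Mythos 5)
Your proposal is correct and takes essentially the same route as the paper: both rest on the canonical representation theorems for positive polynomials of exact order $n$ in \cite{Karlin-Studden66} (Chapter II, Section 10; Chapter V, Section 9; Chapter VI, Section 9), dismiss the degenerate configurations (a vanishing $\alpha_i$, coincident nodes, or a drop in degree) as at most $n$-dimensional and hence Lebesgue-null, and deduce injectivity of $\varphi_n$ from the uniqueness clause of that representation, smoothness being evident. The additional material you include (the Hermite--Biehler sketch for $T=(-\infty,\infty)$, the parameter count, the Jacobian check) is sound but goes beyond what the paper's citation-based argument requires.
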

\begin{proof}
The representations of the positive polynomials on $T=[a,b]$, $[a,\infty)$,
and $(-\infty,\infty)$ whose orders are exactly $n$
are provided in
Section 10 of Chapter II,
Section 9 of Chapter V, and
Section 9 of Chapter VI
of \cite{Karlin-Studden66}, respectively.
They are unique representations when $\alpha_1,\alpha_2>0$ and $a$, $b$, $\gamma_i$'s
are distinct.

Because the contributions of the positive polynomials of order $n$
with $\alpha_1=0$ or $\alpha_2=0$
and the positive polynomials of order less than $n$ are $n$-dimensional
at most, we do not need to take them into account.

The uniqueness of the representation of $p_n$ implies that
the map $\varphi_n$ is one-to-one.  It is obviously differentiable and hence
diffeomorphic.
\end{proof}

\begin{prop}
\label{prop:bK-map}
Suppose that $n\ge 2$.
The boundary of the positive polynomial cone $\partial K_n$
has the almost everywhere representation below.
Define the functions
$\varphi_n^{(i)}(\alpha,\gamma,\widetilde\gamma)\in\R^{n+1}$
with $n-i\ge 1$,
$\alpha\in\R_+^2$, $\gamma\in\Delta_{n-1-i}$, and $\widetilde\gamma\in\mathrm{int}\,T$,
by the coefficient vectors of polynomials as
\[
 (t-\widetilde\gamma)^i p_{n-i}(t;\alpha,\gamma)
 = \varphi_n^{(i)}(\alpha,\gamma,\widetilde\gamma)^\top\psi_n(t)
 \quad (i=1,2).
\]
Define the function $\varphi_2^{(2)}(\alpha_1,\widetilde\gamma)\in\R^3$
with $\alpha_1\in\R_+$ and $\widetilde\gamma\in\mathrm{int}\,T$
by the coefficient vector of a polynomial as
\[
 (t-\widetilde\gamma)^2 \times \alpha_1 = \varphi_2^{(2)}(\alpha_1,\widetilde\gamma)^\top\psi_2(t).
\]
\\
(i) When $T=[a,b]$,
\begin{align}
\partial K_{n} =
 & \begin{cases}
 \varphi_n^{(2)} \Bigl( \R_+^2 \times \Delta_{n-3} \times T \Bigr) & (n\ge 3),
 \\
 \varphi_2^{(2)} \Bigl( \R_+ \times T \Bigr) & (n=2)
 \end{cases} \nonumber \\
 & \sqcup \varphi_n^{(1)} \Bigl( \R_+^2 \times \Delta_{n-2}, a \Bigr)
 \nonumber \\
 & \sqcup \left\{ -\varphi_n^{(1)}
 \Bigl( \R_+^2 \times \Delta_{n-2}, b \Bigr) \right\},
\label{bK-map}
\end{align}
(ii) when $T=[a,\infty)$,
\begin{align}
\partial K_{n} =
 & \begin{cases}
 \varphi_n^{(2)} \Bigl( \R_+^2 \times \Delta_{n-3} \times T \Bigr) & (n\ge 3),
 \\
 \varphi_2^{(2)} \Bigl( \R_+ \times T \Bigr) & (n=2)
 \end{cases} \nonumber \\
 & \sqcup \varphi_n^{(1)} \Bigl( \R_+^2 \times \Delta_{n-2}, a \Bigr)
 \nonumber \\
 & \sqcup \varphi_{n-1} \Bigl( \R_+^2 \times \Delta_{n-2} \Bigr),
\label{bK-map'}
\end{align}
(iii) when $T=(-\infty,\infty)$ and $n=2m$ is even,
\begin{align}
\partial K_{n} =
 & \begin{cases}
 \varphi_n^{(2)} \Bigl( \R_+^2 \times \Delta_{n-3} \times T \Bigr) & (n\ge 3),
 \\
 \varphi_2^{(2)} \Bigl( \R_+ \times T \Bigr) & (n=2)
 \end{cases}
\label{bK-map''}
\end{align}
almost everywhere with respect to the $n$-dimensional Hausdorff measure,
where $\sqcup$ means a disjoint union.
The maps $\varphi_n$, $\varphi_n^{(1)}(\cdot,a)$, $\varphi_n^{(1)}(\cdot,b)$,
and $\varphi_n^{(2)}$ are diffeomorphisms.
\end{prop}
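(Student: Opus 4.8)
The plan is to characterize $\partial K_n$ directly through the vanishing behaviour of the associated polynomials, then match the generic (top-dimensional) boundary strata with the maps $\varphi_n^{(i)}$ and $\varphi_{n-1}$, discarding the lower-dimensional pieces exactly as in the proof of Proposition \ref{prop:bKs-map}. First I would record when $c\in\mathrm{int}\,K_n$: on a bounded $T=[a,b]$ a compactness argument shows every strictly positive polynomial of degree at most $n$ is interior, whereas on an unbounded $T$ the leading coefficient must additionally be positive, since otherwise the perturbation $p-\varepsilon t^n$ leaves $K_n$ for large $t$. Consequently $c\in\partial K_n$ means that $p(\cdot;c)\ge 0$ on $T$ but $p$ either touches zero on $\overline T$ or drops degree, the latter only in the unbounded cases.

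Next I would classify the contacts. Since $p\ge 0$, every zero in $\mathrm{int}\,T$ has even multiplicity, while a zero at a finite endpoint may be simple. The generic boundary polynomial has a single simplest contact, and these give exactly the listed strata: one interior double zero $\widetilde\gamma$ with $p=(t-\widetilde\gamma)^2 p_{n-2}(\cdot;\alpha,\gamma)$, i.e. the $\varphi_n^{(2)}$ term (the constant case $p_0=\alpha_1$ being $\varphi_2^{(2)}$ when $n=2$); one simple zero at $a$ with $p=(t-a)p_{n-1}(\cdot;\alpha,\gamma)$, i.e. $\varphi_n^{(1)}(\cdot,a)$; for $T=[a,b]$ one simple zero at $b$ with $p=(b-t)p_{n-1}(\cdot;\alpha,\gamma)$, i.e. $-\varphi_n^{(1)}(\cdot,b)$; and for $T=[a,\infty)$ a single degree drop, giving the positive polynomial of degree exactly $n-1$ parameterized by $\varphi_{n-1}$. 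When $T=(-\infty,\infty)$ and $n=2m$ the two endpoint terms are absent and the degree-drop term is absent, because an odd-degree polynomial cannot be nonnegative on all of $\R$, so only $\varphi_n^{(2)}$ survives, yielding (\ref{bK-map''}). I would then check the dimension bookkeeping: each surviving domain $\R_+^2\times\Delta_{n-3}\times T$, $\R_+^2\times\Delta_{n-2}$, and (for the degree drop) $\R_+^2\times\Delta_{n-2}$ has dimension $n$, matching the $n$-dimensional Hausdorff measure on $\partial K_n$.

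The maps are one-to-one and smooth, hence diffeomorphisms onto their images: the contact point and the cofactor are read off from the zero set and the degree of $p$, and the remaining parameters $(\alpha,\gamma)$ of $p_{n-i}$ are unique by Proposition \ref{prop:K-map}; smoothness is clear since the coefficient vector depends polynomially on the roots, on $\widetilde\gamma$, and on $\alpha$. The images are pairwise disjoint because an interior double zero, a simple endpoint zero, and a degree drop are mutually exclusive features of $p$, which justifies writing the union as $\sqcup$.

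The hard part will be the measure-theoretic exhaustion in the second step, namely verifying that these simplest-contact strata cover $\partial K_n$ up to $n$-dimensional Hausdorff-null sets. Every other configuration — two or more double zeros, an interior zero of multiplicity at least four, a simple endpoint zero together with an interior double zero, or a degree drop combined with a zero — imposes at least two independent constraints and so has image of dimension at most $n-1$. To make this rigorous and to confirm that the list is complete I would invoke the index (principal representation) theory for positive polynomials of \cite{Karlin-Studden66} cited in Proposition \ref{prop:bKs-map}, which stratifies the boundary according to the index of the representation and identifies the top stratum; the remaining care is the parity and leading-coefficient bookkeeping that selects, case by case in (i)--(iii), precisely which of the $\varphi_{n-i}$-forms and the degree-drop term appear.
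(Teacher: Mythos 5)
Your proposal is correct and follows essentially the same route as the paper's proof: characterize $\partial K_n$ as the nonnegative polynomials having a zero on $T$ (plus the degree-drop polynomials in the unbounded cases), retain only the top-dimensional strata $(t-\widetilde\gamma)^2 p_{n-2}$, $(t-a)p_{n-1}$, $(b-t)p_{n-1}$, and $p_{n-1}$ while discarding all other configurations as null for the $n$-dimensional Hausdorff measure, and obtain disjointness and injectivity from the zero structure together with the uniqueness in Proposition \ref{prop:K-map}. The only difference is one of detail: you supply elementary compactness and perturbation arguments for the interior/boundary characterization, which the paper simply cites as known from the Tchebycheff-system theory of \cite{Karlin-Studden66}.
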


\begin{proof}
(i) The case of $T=[a,b]$.
The boundary of the positive polynomial cone $K_n$ is proved to consist of
the positive polynomials of order $n$ (at most) that have zeros on $T$.
For the almost everywhere representation,
we need only polynomials of the highest degree.
Hence, we can consider only the following three types:
$(t-\widetilde\gamma)^2 p_{n-2}(t;\alpha,\gamma)$
 ($\widetilde\gamma\in\mathrm{int}\,T$),
$(t-a) p_{n-1}(t;\alpha,\gamma)$, and
$(b-t) p_{n-1}(t;\alpha,\gamma)$
with $\alpha_1,\alpha_2>0$.
The above three types have no intersection,
and (\ref{bK-map}) follows.

(ii) The case of $T=[a,\infty)$.
The boundary of the positive polynomial cone $K_n$ is proved to consist of
the positive polynomials of order $n$ (at most) that have zeros on $T$
and the positive polynomials of order $n-1$ (at most).
For the almost everywhere representation,
we can consider only the following three types:
$(t-\widetilde\gamma)^2 p_{n-2}(t;\alpha,\gamma)$
 ($\widetilde\gamma\in\mathrm{int}\,T$),
$(t-a) p_{n-1}(t;\alpha,\gamma)$, and
$p_{n-1}(t;\alpha,\gamma)$ with $\alpha_1,\alpha_2>0$.
These three types have no intersection,
and (\ref{bK-map'}) follows.

(iii) The case of $T=(-\infty,\infty)$.
The boundary of the positive polynomial cone $K_n$ is proved to consist of
the positive polynomials of order $n$ (at most) that have zeros on $T$
and the positive polynomials of order $n-2$ (at most).
For the almost everywhere representation,
we can consider only the case
$(t-\widetilde\gamma)^2 p_{n-2}(t;\alpha,\gamma)$ ($\widetilde\gamma\in T$),
and (\ref{bK-map''}) follows.
\end{proof}

\subsection{Volume formulas and the weights}
\label{subsec:volume}

The diffeomorphic maps appearing in Propositions 
\ref{prop:Ks-map}--\ref{prop:bK-map}
are homogeneous functions with respect to
 their first arguments $\rho$ and $\alpha$.
Therefore, by restricting the length of the first argument, we can construct
almost everywhere representations for the intersections with the unit sphere.
For example, $\phi^{(U)}_{n,n}(r\rho,\tau)=r \phi^{(U)}_{n,n}(\rho,\tau)$
for a constant $r>0$, and we have
\[
 K_{n}^*\cap(\S^n)^*
 = \bar\phi^{(U)}_{n,n}
 \Bigl( \S_+^{\left[\frac{n+1}{2}\right]} \times
 \Delta_{\left[\frac{n}{2}\right]} \Bigr)
 \quad \mathrm{a.e.},
\]
where
\[
 \bar\phi^{(U)}_{n,l}(\rho,\tau) =
 \phi^{(U)}_{n,l}(\rho,\tau)/ \Vert \phi^{(U)}_{n,l}(\rho,\tau) \Vert_{\Sigma},
\]
and
\begin{equation}
\label{S+}
 \S_+^m = \Bigl\{ \rho =(\rho_i) \in \R^{m+1} \mid \sum\rho_i^2 = 1,\ \rho_i > 0 \Bigr\}.
\end{equation}
Define
\begin{align*}
\bar\phi^{(L)}_{n,l}(\rho,\tau) &=
 \phi^{(L)}_{n,l}(\rho,\tau)/ \Vert \phi^{(L)}_{n,l}(\rho,\tau) \Vert_{\Sigma},
 \\
\bar\varphi_{n}(\alpha,\gamma) &=
 \varphi_{n}(\alpha,\gamma)/ \Vert \varphi_{n}(\alpha,\gamma) \Vert_{\Sigma^{-1}},
 \\
\bar\varphi^{(i)}_{n}(\alpha,\gamma,\widetilde\gamma) &=
 \varphi^{(i)}_{n}(\alpha,\gamma,\widetilde\gamma)/ \Vert \varphi^{(i)}_{n}(\alpha,\gamma,\widetilde\gamma) \Vert_{\Sigma^{-1}}
 \quad (i=1,2), \\
\bar\varphi^{(2)}_{2}(\widetilde\gamma) &=
 \varphi^{(2)}_{2}(1,\widetilde\gamma)/ \Vert \varphi^{(2)}_{2}(1,\widetilde\gamma) \Vert_{\Sigma^{-1}}
\end{align*}
similarly.

In the proposition below, let $\theta=(\theta_i)\in\Theta_m$ be
the local coordinates of $\S_+^m$ in (\ref{S+}).
For example,
$\rho=\rho(\theta)=\bigl(\theta_1,\ldots,\theta_m,\sqrt{1-\sum\theta_i^2}\bigr)$,
$\theta\in\Theta_m=\R_+^m$.
Another example is the polar coordinates
$\rho(\theta) = (\rho_i(\theta))$, $\theta\in\Theta_m = (0,\pi/2)^m$
with
$\rho_1(\theta) = \cos\theta_1$,
$\rho_i(\theta) = \cos\theta_i \prod_{j=1}^{i-1}\sin\theta_j$
 ($i=2,\ldots,m$), and
$\rho_{m+1}(\theta) = \prod_{j=1}^{m}\sin\theta_j$.

\begin{prop}
\label{prop:ws}
Let $\xi=(\theta,\tau)$ and
$d\xi=\prod d\theta_i\prod d\tau_i$ be the Lebesgue measure.
($\xi$ may consist of either $\theta$ or $\tau$ only
when the other does not appear in the integrand.)
Write $\rho=\rho(\theta)$ for simplicity.
\begin{align*}
\vol^*_{n}( K_n^*\cap (\S^n)^*) 
& = \int_{\Theta_{\left[\frac{n+1}{2}\right]}\times\Delta_{\left[\frac{n}{2}\right]}} \det \left\{
\biggl(\frac{\partial \bar\phi^{(U)}_{n,n}(\rho,\tau)}{\partial\xi}\biggr)^\top
\Sigma
\biggl(\frac{\partial \bar\phi^{(U)}_{n,n}(\rho,\tau)}{\partial\xi}\biggr)
\right\}^{\frac{1}{2}} d\xi
 \\
& = \int_{\Theta_{\left[\frac{n}{2}\right]}\times\Delta_{\left[\frac{n+1}{2}\right]}} \det \left\{
\biggl(\frac{\partial \bar\phi^{(L)}_{n,n}(\rho,\tau)}{\partial\xi}\biggr)^\top
\Sigma
\biggl(\frac{\partial \bar\phi^{(L)}_{n,n}(\rho,\tau)}{\partial\xi}\biggr)
\right\}^{\frac{1}{2}} d\xi,
\end{align*}
and when $n\ge 2$,
\begin{align}
\vol^*_{n-1}( \partial & K_n^*\cap (\S^n)^*) \nonumber\\
=& \int_{\Theta_{\left[\frac{n-1}{2}\right]}\times\Delta_{\left[\frac{n}{2}\right]}} \det \left\{
\biggl(\frac{\partial \bar\phi^{(L)}_{n,n-1}(\rho,\tau)}{\partial\xi}\biggr)^\top
\Sigma
\biggl(\frac{\partial \bar\phi^{(L)}_{n,n-1}(\rho,\tau)}{\partial\xi}\biggr)
\right\}^{\frac{1}{2}} d\xi \nonumber \\
 &+\int_{\Theta_{\left[\frac{n}{2}\right]}\times\Delta_{\left[\frac{n-1}{2}\right]}} \det \left\{
\biggl(\frac{\partial \bar\phi^{(U)}_{n,n-1}(\rho,\tau)}{\partial\xi}\biggr)^\top
\Sigma
\biggl(\frac{\partial \bar\phi^{(U)}_{n,n-1}(\rho,\tau)}{\partial\xi}\biggr)
\right\}^{\frac{1}{2}} d\xi \nonumber \\
& \hspace*{15em}
 (\mbox{if\,\ $T=[a,b]$ or $[a,\infty)$}).
\label{w1}
\end{align}
The second term in the right-hand side of (\ref{w1}) is not needed
when $T=(-\infty,\infty)$. 
\end{prop}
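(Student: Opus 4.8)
The plan is to evaluate each volume directly by the area formula for parameterized surfaces, feeding in the almost-everywhere diffeomorphic representations already established in Propositions~\ref{prop:Ks-map} and~\ref{prop:bKs-map}. The underlying fact I would use is that, for a smooth injective map $F:U\to\R^{n+1}$ on an open set $U\subseteq\R^d$, the $d$-dimensional volume of $F(U)$ induced by $\langle\,,\,\rangle_\Sigma$ equals
\[
 \int_U \det\Bigl\{
 \Bigl(\frac{\partial F}{\partial\xi}\Bigr)^\top \Sigma
 \Bigl(\frac{\partial F}{\partial\xi}\Bigr)
 \Bigr\}^{1/2} d\xi,
\]
the integrand being the square root of the Gram determinant of the tangent vectors $\partial F/\partial\xi$ in the inner product $\langle\,,\,\rangle_\Sigma$. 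The cleanest justification is to introduce the linear isometry $x\mapsto\Sigma^{1/2}x$ from $(\R^{n+1},\langle\,,\,\rangle_\Sigma)$ onto Euclidean space, carry $F$ to $\Sigma^{1/2}F$, and apply the ordinary Euclidean area formula (\cite{Federer96}); pushing the Jacobian through the isometry reproduces exactly the displayed integrand, since $(\Sigma^{1/2}\partial F/\partial\xi)^\top(\Sigma^{1/2}\partial F/\partial\xi)=(\partial F/\partial\xi)^\top\Sigma(\partial F/\partial\xi)$.

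Next I would pass from the cones to their spherical sections by homogeneity, as prepared in the paragraph preceding the proposition. Because each of $\phi^{(U)}_{n,n},\phi^{(L)}_{n,n},\phi^{(U)}_{n,n-1},\phi^{(L)}_{n,n-1}$ is linear in the first argument $\rho$, restricting $\rho$ to $\S_+^m$ through $\rho=\rho(\theta)$ and dividing by $\Vert\,\Vert_\Sigma$ produces the normalized maps $\bar\phi^{(U)}_{n,n}$, etc., which by~(\ref{Ks-map}) and~(\ref{bKs-map}) parameterize $K_n^*\cap(\S^n)^*$ and $\partial K_n^*\cap(\S^n)^*$ almost everywhere; injectivity survives restriction to $\Vert\rho\Vert=1$ and radial normalization, so these remain diffeomorphisms onto their images. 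Substituting $\rho=\rho(\theta)$ makes the chain rule absorb the metric on $\S_+^m$ into $\partial\bar\phi/\partial\xi$ with $\xi=(\theta,\tau)$, so applying the area formula above to $\bar\phi^{(U)}_{n,n}$ and then to $\bar\phi^{(L)}_{n,n}$ yields the two displayed expressions for $\vol^*_n(K_n^*\cap(\S^n)^*)$; they coincide because both charts cover the same set almost everywhere, the common value being its volume.

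For the boundary term I would use the disjoint decomposition~(\ref{bKs-map}): up to a null set, $\partial K_n^*$ is the disjoint union of the images of $\phi^{(L)}_{n,n-1}$ and $\phi^{(U)}_{n,n-1}$, the latter piece being absent when $T=(-\infty,\infty)$ by~(\ref{bKs-map'}). Since the union is disjoint and each piece is parameterized a.e.\ diffeomorphically, the $(n-1)$-dimensional volume is additive, and applying the area formula to $\bar\phi^{(L)}_{n,n-1}$ and to $\bar\phi^{(U)}_{n,n-1}$ separately gives precisely the two integrals of~(\ref{w1}), with the second dropping out in the case $T=(-\infty,\infty)$ just as the corresponding component of the representation does.

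The only point needing genuine care---and the main obstacle, modest as it is---is to check that the \emph{almost-everywhere} representations are harmless for these Hausdorff integrals. I would argue that the configurations discarded in Propositions~\ref{prop:Ks-map} and~\ref{prop:bKs-map} (those with some $\rho_i=0$, with coincident nodes among $a,\tau_i,b$, or with polynomials of lower order) parameterize sets of strictly smaller dimension, as recorded in the dimension counts inside the proofs of those propositions; intersected with the sphere they are null for the measure being integrated and so contribute nothing. As each retained map is a diffeomorphism onto the complementary full-measure part, the area formula applies with neither over- nor under-counting, and the remaining evaluation of the Gram determinants is the routine computation left to the explicit determination of the weights.
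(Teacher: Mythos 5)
Your proposal is correct and takes essentially the same route as the paper, which states Proposition~\ref{prop:ws} without a separate proof precisely because it follows from the a.e.\ diffeomorphic representations of Propositions~\ref{prop:Ks-map} and~\ref{prop:bKs-map}, the homogeneity-and-normalization argument in the paragraph preceding the proposition, and the standard Gram-determinant (area) formula for the volume of a parameterized submanifold under the metric $\langle\,,\,\rangle_\Sigma$. The details you supply---the isometry $x\mapsto\Sigma^{1/2}x$ reducing to the Euclidean area formula, injectivity surviving radial normalization, additivity over the disjoint boundary pieces, and the check that the discarded lower-dimensional configurations are Hausdorff-null on the spherical sections---are exactly the steps the paper leaves implicit.
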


\begin{prop}
\label{prop:w}
Let $\zeta=(\theta,\gamma,\widetilde\gamma)$ and
$d\zeta=d\theta\prod d\gamma_i\,d\widetilde\gamma$ be the Lebesgue measure.
(Some of $\theta,\gamma,\widetilde\gamma$ may not be included in $\zeta$
if they do not appear in the integrand.)
Let $\alpha=(\cos\theta,\sin\theta)$.
\[
\vol_{n}( K_n\cap \S^n)
 = \int_{(0,\frac{\pi}{2})\times\Delta_{n-1}} \det \left\{
\biggl(\frac{\partial \bar\varphi_{n}(\alpha,\gamma)}{\partial\zeta}\biggr)^\top
\Sigma^{-1}
\biggl(\frac{\partial \bar\varphi_{n}(\alpha,\gamma)}{\partial\zeta}\biggr)
\right\}^{\frac{1}{2}} d\zeta,
\]
and when $n\ge 2$,
\begin{align}
\vol_{n-1}( \partial & K_n\cap \S^n) \nonumber \\
=&
 \begin{cases}
 \displaystyle
 \int_{(0,\frac{\pi}{2})\times\Delta_{n-3}\times T} \det \left\{
\biggl(\frac{\partial \bar\varphi^{(2)}_{n}(\alpha,\gamma,\widetilde\gamma)}{\partial\zeta}\biggr)^\top
\Sigma^{-1}
\biggl(\frac{\partial \bar\varphi^{(2)}_{n}(\alpha,\gamma,\widetilde\gamma)}{\partial\zeta}\biggr)
\right\}^{\frac{1}{2}} d\zeta & (n\ge 3), \\
 \displaystyle
 \int_{T} \left\{
\biggl(\frac{\partial \bar\varphi^{(2)}_{2}(\widetilde\gamma)}{\partial\widetilde\gamma}\biggr)^\top
\Sigma^{-1}
\biggl(\frac{\partial \bar\varphi^{(2)}_{2}(\widetilde\gamma)}{\partial\widetilde\gamma}\biggr)
\right\}^{\frac{1}{2}} d\widetilde\gamma & (n=2)
 \end{cases} \nonumber \\
&+
 \int_{(0,\frac{\pi}{2})\times\Delta_{n-2}} \det \left\{
\biggl(\frac{\partial \bar\varphi^{(1)}_{n}(\alpha,\gamma,a)}{\partial\zeta}\biggr)^\top
\Sigma^{-1}
\biggl(\frac{\partial \bar\varphi^{(1)}_{n}(\alpha,\gamma,a)}{\partial\zeta}\biggr)
\right\}^{\frac{1}{2}} d\zeta \nonumber \\
& \hspace*{17.5em}
 (\mbox{if\,\ $T=[a,b]$ or $[a,\infty)$}) \nonumber \\
&+
 \int_{(0,\frac{\pi}{2})\times\Delta_{n-2}} \det \left\{
\biggl(\frac{\partial \bar\varphi^{(1)}_{n}(\alpha,\gamma,b)}{\partial\zeta}\biggr)^\top
\Sigma^{-1}
\biggl(\frac{\partial \bar\varphi^{(1)}_{n}(\alpha,\gamma,b)}{\partial\zeta}\biggr)
\right\}^{\frac{1}{2}} d\zeta \nonumber \\
& \hspace*{17.5em}
 (\mbox{if\,\ $T=[a,b]$}) \nonumber \\
&+
 \int_{(0,\frac{\pi}{2})\times\Delta_{n-2}} \det \left\{
\biggl(\frac{\partial \bar\varphi_{n-1}(\alpha,\gamma)}{\partial\zeta}\biggr)^\top
\Sigma^{-1}
\biggl(\frac{\partial \bar\varphi_{n-1}(\alpha,\gamma)}{\partial\zeta}\biggr)
\right\}^{\frac{1}{2}} d\zeta \nonumber \\
& \hspace*{17.5em}
 (\mbox{if\,\ $T=[a,\infty)$}).
\label{wn}
\end{align}
In the right-hand side of (\ref{wn}),
the second term is not needed for $T=(-\infty,\infty)$,
the third term is not needed for $T=[a,\infty)$ and $(-\infty,\infty)$,
the fourth term is not needed for $T=[a,b]$ and $(-\infty,\infty)$.
\end{prop}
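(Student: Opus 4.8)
The plan is to recognize Proposition \ref{prop:w} as nothing more than the area formula for the Hausdorff measure of a $C^1$-parameterized submanifold, applied to the normalized parameterizations of $K_n\cap\S^n$ and $\partial K_n\cap\S^n$ furnished by Propositions \ref{prop:K-map} and \ref{prop:bK-map}. The single analytic ingredient is the standard fact (\cite{Federer96}) that if an open set $U\subseteq\R^d$ is mapped diffeomorphically onto a set $M\subseteq\R^{n+1}$ by $\Phi$, then the $d$-dimensional volume of $M$ in the metric $\langle\,,\,\rangle_G$ is
\[
 \vol_d(M) = \int_U \det\Bigl\{ \bigl(\partial\Phi/\partial u\bigr)^\top G \bigl(\partial\Phi/\partial u\bigr) \Bigr\}^{1/2}\, du,
\]
where $\partial\Phi/\partial u$ is the $(n+1)\times d$ Jacobian matrix; throughout, $G=\Sigma^{-1}$ is the metric of the space in which $K_n$ resides.

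First I would pass to the unit sphere using homogeneity. Each of the maps $\varphi_n$, $\varphi_n^{(1)}$, $\varphi_n^{(2)}$, and $\varphi_{n-1}$ is homogeneous of degree one in its first argument $\alpha$, so the normalized map $\bar\varphi=\varphi/\Vert\varphi\Vert_{\Sigma^{-1}}$ depends on $\alpha$ only through its direction. Writing $\alpha=(\cos\theta,\sin\theta)$ with $\theta\in(0,\pi/2)$ replaces the two positive coordinates of $\alpha$ by a single angular coordinate $\theta$ and eliminates precisely the radial degree of freedom transverse to $\S^n$, because scaling $\Vert\alpha\Vert$ scales the vector $\varphi$ along the ray through the origin. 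Hence each $\bar\varphi$ parameterizes, almost everywhere, the corresponding piece of $K_n\cap\S^n$ or $\partial K_n\cap\S^n$, and a dimension count confirms the parameter set has the right size: $\zeta=(\theta,\gamma)$ has dimension $1+(n-1)=n$ for $K_n\cap\S^n$, while for the boundary $\zeta=(\theta,\gamma,\widetilde\gamma)$ gives $1+(n-3)+1=n-1$ in the $\varphi_n^{(2)}$ term and $1+(n-2)=n-1$ in the $\varphi_n^{(1)}$ and $\varphi_{n-1}$ terms.

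Next I would assemble the boundary formula term by term. By Proposition \ref{prop:bK-map}, $\partial K_n$ is, almost everywhere with respect to $n$-dimensional Hausdorff measure, a disjoint union of the diffeomorphic images of $\varphi_n^{(2)}$, $\varphi_n^{(1)}(\cdot,a)$, $\varphi_n^{(1)}(\cdot,b)$ (in case (i)), and $\varphi_{n-1}$ (in case (ii)). Because the volume of a disjoint union is additive and the exceptional set is Hausdorff-null, it suffices to apply the area formula with $G=\Sigma^{-1}$ to each normalized piece separately and sum the results. This produces exactly the Gram-determinant integrals of (\ref{wn}), with each term appearing precisely for the intervals $T$ for which the corresponding piece is present in Proposition \ref{prop:bK-map}; the formula for $\vol_n(K_n\cap\S^n)$ arises in the same way from the single piece $\bar\varphi_n$ of Proposition \ref{prop:K-map}.

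I expect the main difficulty to lie in the bookkeeping rather than in any hard estimate. Two points require care. First, the piece $-\varphi_n^{(1)}(\cdot,b)$ in (\ref{bK-map}) carries a global sign; since $x\mapsto-x$ is a $\langle\,,\,\rangle_{\Sigma^{-1}}$-isometry it preserves volume, so the normalized map $\bar\varphi_n^{(1)}(\cdot,b)$ may be used directly and the reflection contributes nothing to the integrand. Second, one must confirm that the several almost-everywhere exceptional sets---where a $\varphi$-map degenerates, where $\alpha$ meets a coordinate axis, or where two pieces could overlap---are genuinely Hausdorff-null, which is exactly what the diffeomorphism and disjointness assertions of Proposition \ref{prop:bK-map} guarantee. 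Finally, the case $n=2$ needs separate handling because the index set $\Delta_{n-3}$ is then vacuous and the $\gamma$-argument disappears: here $\varphi_2^{(2)}$ depends only on the scalar $\alpha_1$ and on $\widetilde\gamma$, so after normalization $\bar\varphi_2^{(2)}$ is a curve in $\widetilde\gamma$ alone and the area formula collapses to the one-dimensional integral displayed in (\ref{wn}).
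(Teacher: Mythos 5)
Your proposal is correct and matches the paper's (implicit) argument: the paper states Proposition \ref{prop:w} without a separate proof, treating it as an immediate consequence of the almost-everywhere diffeomorphic parameterizations in Propositions \ref{prop:K-map} and \ref{prop:bK-map}, the degree-one homogeneity in $\alpha$ that lets one pass to the sphere via $\alpha=(\cos\theta,\sin\theta)$, and the standard Gram-determinant (area) formula for Hausdorff measure under the metric $\langle\,,\,\rangle_{\Sigma^{-1}}$. Your additional bookkeeping points --- the reflection $-\varphi_n^{(1)}(\cdot,b)$ being a $\Sigma^{-1}$-isometry, the null exceptional sets, and the degenerate $n=2$ case --- are exactly the details the paper leaves to the reader.
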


Substituting the volumes obtained in
Propositions \ref{prop:ws} and \ref{prop:w} into (\ref{weights}),
we get $w_{n+1}$, $w_{n}$, $w_{0}$ and $w_{1}$.
Combined with the Gauss-Bonnet theorem (\ref{gauss-bonnet}),
all weights $\{w_i\}$ for $n\le 4$ are obtained as follows.
\[
(w_0,\ldots,w_{n+1})=
\begin{cases}
 \bigl(w_0,\frac{1}{2},\frac{1}{2}-w_0\bigr) =
 \bigl(\frac{1}{2}-w_{n+1},\frac{1}{2},w_{n+1}\bigr)
 & (n=1), \\
 \bigl(\frac{1}{2}-w_n,w_1,w_n,\frac{1}{2}-w_1\bigr)
 & (n=2), \\
 \bigl(w_0,w_1,\frac{1}{2}-w_0-w_{n+1},w_n,w_{n+1}\bigr) & (n=3), \\
 \bigl(w_0,w_1,\frac{1}{2}-w_0-w_n,\frac{1}{2}-w_1-w_{n+1},w_n,w_{n+1}\bigr)
& (n=4).
\end{cases}
\]

For $n>4$, some of the weights are undetermined.
However, thanks to the Gauss-Bonnet theorem (\ref{gauss-bonnet}),
and noting that $\bar G_i(a)$ and $\bar B_{\frac{i}{2},\frac{n+1-i+\nu}{2}}(a)$
are increasing in $i$, and that
$\bar G_{n+1-i}(b)$ and $\bar B_{\frac{n+1-i}{2},\frac{\nu}{2}}(b)$ are
decreasing in $i$, we have upper and lower bounds for the marginal
distributions of (\ref{chi-bar}) and (\ref{E-bar}).
For example, the bounds for $\lambda_{01}$ and $\lambda_{12}$ are given by
\begin{align*}
 \sum_{i=0}^{n+1} u_i \bar G_i(a) & \le P_{H_0}(\lambda_{01}\ge a) \le
 \sum_{i=0}^{n+1} v_i \bar G_i(a), \\
 \sum_{i=0}^{n+1} v_i \bar G_{n+1-i}(a) & \le P_{H_0}(\lambda_{12}\ge a) \le
 \sum_{i=0}^{n+1} u_i \bar G_{n+1-i}(a),
\end{align*}
where
\begin{align*}
(u_0,&\ldots,u_{n+1}) \\ &=
\begin{cases}
 \bigl(w_0,w_1,\frac{1}{2}-w_0-w_{n+1},\frac{1}{2}-w_1-w_{n},\underbrace{0,\ldots,0}_{n-4},w_n,w_{n+1}\bigr)
& (\mbox{$n$\,:\,odd}), \\
 \bigl(w_0,w_1,\frac{1}{2}-w_0-w_{n},\frac{1}{2}-w_1-w_{n+1},\underbrace{0,\ldots,0}_{n-4},w_n,w_{n+1}\bigr)
& (\mbox{$n$\,:\,even}),
\end{cases} \\
(v_0,&\ldots,v_{n+1}) \\ &=
\begin{cases}
 \bigl(w_0,w_1,\underbrace{0,\ldots,0}_{n-4},\frac{1}{2}-w_1-w_{n},
 \frac{1}{2}-w_0-w_{n+1},w_n,w_{n+1}\bigr)
& (\mbox{$n$\,:\,odd}), \\
 \bigl(w_0,w_1,\underbrace{0,\ldots,0}_{n-4},\frac{1}{2}-w_0-w_{n},\frac{1}{2}-w_1-w_{n+1},w_n,w_{n+1}\bigr)
& (\mbox{$n$\,:\,even}).
\end{cases}
\end{align*}

Moreover, since $\bar G_i(a)=o(\bar G_{n+1}(a))$ as $a\to\infty$ for $i<n+1$,
the tail probabilities of $\lambda_{01}$ and $\lambda_{12}$ have asymptotic
expressions
\[
 P_{H_0}(\lambda_{01}\ge a) \, \sim \, w_{n+1} \bar G_{n+1}(a), \quad
 P_{H_0}(\lambda_{12}\ge a) \, \sim \, w_{0} \bar G_{n+1}(a)
\]
as $a\to\infty$.

\section{Computational aspects}
\label{sec:computation}

\subsection{A numerical procedure for MLE}
\label{subsec:programming}

To obtain the LRT statistics $\lambda_{01}$ and $\lambda_{12}$ in
(\ref{lambda}), we need to perform the orthogonal projection onto
the positive polynomial cone $K$.  For this purpose,
the following symmetric cone programming technique is useful.
In this subsection, we treat only the case of $T=[a,b]$ with finite $a,b$.   
However, the technique explained here is easily extended to the other cases.

The positive polynomial $p_{n}(t)$ of degree $n$ on the set $T$ is
characterized in Proposition \ref{prop:K-map}.  
This is a unique representation.
Admitting the redundancy of the parameters, this polynomial is
rewritten as
\begin{equation}
\label{markov-lukacs}
p_{n} (t) = \begin{cases}
 \displaystyle
  \psi_m(t)^\top Q_1 \psi_m(t)
 + (t-a)(b-t) \psi_{m-1}(t)^\top Q_2 \psi_{m-1}(t) & (n=2m), \\
 \displaystyle
   (t-a) \psi_m(t)^\top Q_1 \psi_m(t)
 + (b-t) \psi_m(t)^\top Q_2 \psi_m(t) & (n=2m+1),
\end{cases}
\end{equation}
where $Q_1$ and $Q_2$ are symmetric positive semi-definite matrices. 
This polynomial (\ref{markov-lukacs}) is obviously nonnegative on $T=[a,b]$.
Conversely, the polynomial $p_{n}(t)$ in Proposition \ref{prop:K-map}
can be written as (\ref{markov-lukacs}).
This representation is sometimes referred to as
the Markov-Lukacs theorem (\cite{Nesterov00}).

By arranging the terms, the polynomial $p_{n}(t)$ in (\ref{markov-lukacs})
can be written as $p_{n}(t)=e(Q_1,Q_2)^\top \psi_n(t)$, where $e(Q_1,Q_2)$
is a $(n+1)$-dimensional column vector depending on $Q_1$ and $Q_2$.
Using this representation, the orthogonal projection of a given vector
$\widehat c$ onto the positive polynomial cone $K$ is formalized as
the optimization problem below:

\begin{quote}
\begin{tabular}{lll}
\texttt{maximize}   & $-d$ & \\[1mm]
\texttt{subject to} & $d \ge \Vert \widehat c - c \Vert_{\Sigma^{-1}}$ &
 (quadratic cone restriction) \\[1mm]
                    & $c = e(Q_1,Q_2)$ &
 (linear restriction) \\[1mm]
                    & $Q_1, Q_2 \succeq 0$ &
 (PSD cone restriction)
\end{tabular}
\end{quote}
This is an optimization problem with quadratic cone, linear, and
positive semi-definite (PSD) cone restrictions.
This can be solved in the framework of symmetric cone programming.
Several public software programs are available (e.g., SeDuMi by \cite{Sturm99}).

Figure \ref{fig:proj} shows an example of orthogonal projection.
Let $K$ be the positive polynomial cone of order $n=3$ on the set
$T=[a,b]=[0,1]$.
Under the metric $\Vert\,\Vert_{\Sigma^{-1}}$ with
$\Sigma = \left((i+j-1)^{-1}\right)^{-1}_{1\le i,j\le 4}$,
the orthogonal projection of
$f(t; \widehat c) = 0.5 t -1.5 t^2 + t^3$ 
onto $K$ is given by
$f(t; \widehat c_K) = 0.0258 + 0.5151 t -1.4891 t^2 + 1.0086 t^3$. 
In Figure \ref{fig:proj},
$f(t; \widehat c)$ is depicted as a dashed line (- - -),
and the projection $f(t; \widehat c_K)$ is depicted as a solid line (------).
\begin{figure}[h]
\begin{center}
\vspace*{-13.5mm}
\scalebox{0.4}{\rotatebox{-90}{\includegraphics{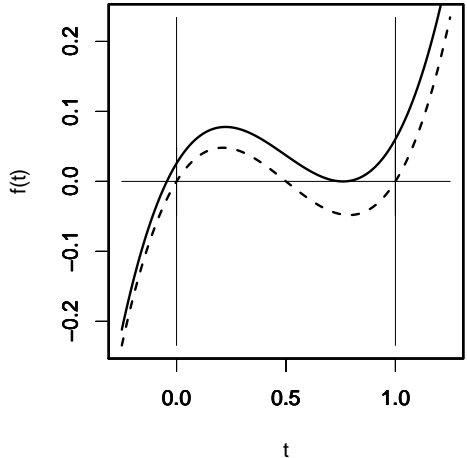}}}
\end{center}
\vspace*{-10mm}
\caption{Projection onto the positive polynomial cone $K_{3}$.}
\label{fig:proj}
\end{figure}

\subsection{Analysis of growth curve data: An example}
\label{subsec:example}

In this subsection, we analyze
growth curve data cited in \cite{Potthoff-Roy64}.
The dataset consists of a certain measurement
on dental study for 11 girls and 16 boys at ages 8, 10, 12, and 14 years.

In our study, let $t$ be the age minus 11
for stabilizing numerical calculations.
The measurements of the individual $h$ at the age $t+11$ in the girl and boy
groups are denoted by $x_{0ht}$ and $x_{1ht}$, respectively.
For modeling the difference of the profiles (mean vectors) of two groups,
we assume the multivariate normal model:
\begin{align}
& x_{0ht} = \mu_t + \varepsilon_{0th}, \quad
  h=1,\ldots,n_0\,(=11), \nonumber \\
& x_{1ht} = \mu_t + f(t;c) + \varepsilon_{1th}, \quad
  h=1,\ldots,n_1\,(=16),
\label{model}
\end{align}
with
\[
 f(t;c) = c_0 + c_1 t + c_2 t^2 + c_3 t^3,
\]
where $\varepsilon_{jh} = (\varepsilon_{jht})_{t\in\{-3,-1,1,3\}}$ ($j=0,1$)
are independent Gaussian error vectors with mean zero.
For the covariance matrices, we assume the intraclass correlation structure
\begin{equation}
\label{intraclass}
 \Sigma_j = \Cov\bigl(\varepsilon_{jh},\varepsilon_{jh}\bigr)
 = \tau_j \{ (1-\rho_j) I + \rho_j J \} \quad (j=0,1),
\end{equation}
where $J$ is the $4\times 4$ matrix with all entries 1, and
$\tau_j$ and $\rho_j$ are unknown parameters.
The model (\ref{intraclass}) is widely used covariance structure
in the analysis of growth curves and repeated measurements
 (\cite{Crowder-Hand90}, \cite{Kato-etal10}).

Under the model (\ref{model}) with (\ref{intraclass}),
the MLEs are calculated as
%
\[
 \widehat c = (\widehat c_0,\widehat c_1,\widehat c_2,\widehat c_3)^\top
 = (2.053, 0.551, 0.0536, -0.0301)^\top,
\]
and
$\widehat\tau_0=4.469$, $\widehat\rho_0=0.868$,
$\widehat\tau_1=5.147$, $\widehat\rho_1=0.479$.
If $\Sigma_0$ and $\Sigma_1$ are known, $\widehat c$ is distributed as the
normal distribution with covariance matrix $\Sigma=(F^\top V^{-1} F)^{-1}$,
where $V= n_0^{-1}\Sigma_0 + n_1^{-1}\Sigma_1$ and
$F=(t^i)_{t\in\{-3,-1,1,3\},\,0\le i\le 3}$ is the design matrix.
The MLE of $\Sigma$ is obtained as
\[
\widehat\Sigma =
\begin{pmatrix}
 0.649   & 0       & -0.0173 & 0 \\
 0       & 0.140   & 0       & -0.0157 \\
 -0.0173 & 0       & 0.00345 & 0 \\
 0       & -0.0157 & 0       & 0.00192
\end{pmatrix}.
\]
In the following, we treat $\widehat\Sigma$ as the true value,
and suppose the statistic $\widehat c$ to be a Gaussian vector
with mean $c$ and covariance matrix $\widehat\Sigma$
as an approximating analysis.

Let us focus on the whole period from ages 8 to 14 years, that is, 
$T=[-3,3]$, and consider the positivity on the set $T$.
The hierarchical hypotheses in (\ref{H012}) are
$H_0$\,:\,$f(t;c)\equiv 0$ ($c=0$),
$H_1$\,:\,$f(t;c)$ is a positive polynomial on $T$ ($c\in K_3$), and
$H_2$\,:\,$f(t;c)$ is unrestricted ($c\in\R^{3+1}$). 
Since $f(t;\widehat c\,)$ is already positive on $T$,
the orthogonal projection $\widehat c_K$ is $\widehat c$ itself,
and the LRT statistic for testing $H_0$ against $H_1$
is $\lambda_{01} = \Vert \widehat c_K \Vert_{\widehat\Sigma^{-1}}^2 =
 \Vert \widehat c \Vert_{\widehat\Sigma^{-1}}^2 = 19.293$. 
This looks highly significant because the $p$-value
referring to the chi-square distribution with 4 degrees of freedom
is already $0.000688$. 
Actually, by means of Propositions \ref{prop:ws} and \ref{prop:w},
the weights for the distribution of $\lambda_{01}$ are
%
\[
 (w_0,w_1,w_2,w_3,w_4) = (0.0072, 0.0657, 0.2416, 0.4343, 0.2512),
\]
and the $p$-value for $\lambda_{01}$ is obtained as 0.000293. 
We conclude that the growth curve of the boy group
is always beyond that of the girl group. 

Then, what about the growth rates of the two groups?  Is
the growth rate of the boy group always greater than that of the girl group?
In order to confirm this hypothesis,
let us take the differential of $f(t;\widehat c\,)$:
\[
 f'(t;\widehat c\,) = \widehat c_1 + 2\,\widehat c_2\,t + 3\,\widehat c_3\,t^2
 =  f(t;\widehat d\,),
\]
where
\[
 \widehat d = L\,\widehat c = (0.551, 0.107, -0.0902)^\top, \quad
 L =
 \begin{pmatrix}
  0 & 1 & 0 & 0 \\
  0 & 0 & 2 & 0 \\
  0 & 0 & 0 & 3
 \end{pmatrix}.
\]
We suppose that $\widehat d$ is distributed as the normal distribution
$N_3(d,L\widehat\Sigma L^\top)$, $d=L c$.
Here again, we consider the hierarchical hypotheses in (\ref{H012}) that
$H_0$\,:\,$f'(t;c)\equiv 0$ ($d=0$),
$H_1$\,:\,$f'(t;c)$ is a positive polynomial on $T$ ($d\in K_2$), and
$H_2$\,:\,$f'(t;c)$ is unrestricted ($d\in\R^{2+1}$). 
Since $f'(-3;\widehat c\,)=-0.582<0<f'(0;\widehat c\,)=\widehat c_1\,(=0.551)$,
$f'(t;\widehat c\,)$ is not a positive polynomial on $T=[-3,3]$.
The orthogonal projection of $\widehat d$ onto $K$ under the metric
$\langle , \rangle_{(L\widehat\Sigma L^\top)^{-1}}$ is
\[
 \widehat d_K = (0.348, 0.0776, -0.0128)^\top.
\]
The LRT statistics for testing $H_0$ against $H_1$, and
for testing $H_1$ against $H_2$ are obtained as
$\lambda_{01}=9.293$ and $\lambda_{12}=0.417$, respectively.
The weights are computed as
\[
 (w_0,w_1,w_2,w_3) = (0.3318, 0.4792, 0.168, 0.0208).
\]
Using these weights, the $p$-values for $\lambda_{01}$ and $\lambda_{12}$
are calculated as $0.00324$ and $0.787$, respectively.
Thus, the hypothesis that $f'$ is a positive polynomial is accepted, and
the hypothesis that $f'\equiv 0$ is rejected at the 1\% significance level.
We conclude that the growth rate of the boy group is always greater than
that of the girl group between the age 8 and 14.


\subsubsection*{Acknowledgment}

The authors thank Takashi Tsuchiya for his helpful comments on symmetric cone programming.

\end{document}